\newlength\knuthian@fdfive
\def\mathpal@save#1{\let\was@math@style=#1\relax}
\def\utilde#1{\mathpalette\mathpal@save
              {\setbox124=\hbox{$\was@math@style#1$}%
\setbox125=\hbox{$\fam=3\global\knuthian@fdfive=\fontdimen5\font$}
\setbox125=\hbox{$\widetilde{\vrule height 0pt depth 0pt width \wd124}$}%
               \baselineskip=1pt\relax
               \lineskiplimit=\z@\relax
               \lineskip=1pt\relax
               \vtop{\copy124\copy125\vskip -\knuthian@fdfive}}}
\declaretheorem[numberwithin=section]{theorem}
\newtheorem{corollary}[theorem]{Corollary}
\newtheorem{proposition}[theorem]{Proposition}
\newtheorem*{claim}{Claim}
\theoremstyle{definition}
\newtheorem{definition}[theorem]{Definition}
\theoremstyle{remark}
\newtheorem{remark}[theorem]{Remark}
\newcommand{\HJ}{\mathcal{O}}
\newcommand{\ATR}{\mathsf{ATR}}
\newcommand{\DS}{\mathsf{DS}}
\newcommand{\ACA}{\mathsf{ACA}}
\newcommand{\RCA}{\mathsf{RCA}}
\newcommand{\M}{\mathfrak{M}}
\newcommand{\N}{\mathfrak{N}}
\newcommand{\Lang}{\mathcal{L}}
\DeclareMathOperator{\rank}{rank}
\begin{document}

\title{Incompleteness and jump hierarchies}

\author{Patrick Lutz}
\address{Department of Mathematics, University of California, Berkeley}
\email{pglutz@berkeley.edu}

\author{James Walsh}
\address{Group in Logic and the Methodology of Science, University of California, Berkeley}
\email{walsh@math.berkeley.edu}

\subjclass[2010]{Primary 03F35, 03D55; Secondary 03F40}

\commby{Heike Mildenberger}

\thanks{Thanks to Antonio Montalb\'an, Ted Slaman, and an anonymous referee for their comments and suggestions. Also, thanks to Jun Le Goh for alerting us to errors in the statements of theorem \ref{ms2} and corollary \ref{ms3}.}

\begin{abstract}
This paper is an investigation of the relationship between G\"odel's second incompleteness theorem and the well-foundedness of jump hierarchies. It follows from a classic theorem of Spector's that the relation $\{(A,B) \in \mathbb{R}^2 : \mathcal{O}^A \leq_H B\}$ is well-founded. We provide an alternative proof of this fact that uses G\"odel's second incompleteness theorem instead of the theory of admissible ordinals. We then use this result to derive a semantic version of the second incompleteness theorem, originally due to Mummert and Simpson. Finally, we turn to the calculation of the ranks of reals in this well-founded relation. We prove that, for any $A\in\mathbb{R}$, if the rank of $A$ is $\alpha$, then $\omega_1^A$ is the $(1 + \alpha)^{\text{th}}$ admissible ordinal. It follows, assuming suitable large cardinal hypotheses, that, on a cone, the rank of $X$ is $\omega_1^X$.
\end{abstract}

\maketitle

\section{Introduction}

% \blfootnote{2010 \emph{Mathematics Subject Classification.} Primary 03F35, 03D55; Secondary 03F40.}

In this paper we explore a connection between G\"odel's second incompleteness theorem and recursion-theoretic jump hierarchies. Our primary technical contribution is a method for proving the well-foundedness of jump hierarchies; this method crucially involves the second incompleteness theorem. We use this technique to provide a proof of the following theorem:

\begin{restatable}{theorem}{main}
\label{thm-main}
There is no sequence $(A_n)_{n<\omega}$ of reals such that, for each $n$, the hyperjump of $A_{n+1}$ is hyperarithmetical in $A_n$.
\end{restatable}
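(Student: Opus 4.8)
The plan is to assume, toward a contradiction, that such a sequence $(A_n)_{n<\omega}$ exists, so that $\HJ^{A_{n+1}} \leq_H A_n$ for every $n$, and to extract from it a single consistent recursively axiomatized theory that proves its own consistency, contradicting G\"odel's second incompleteness theorem. For orientation I first note the classical route: one checks that $\HJ^{A} \leq_H B$ forces $\omega_1^{A} < \omega_1^{B}$ (since $\HJ^{A}$ computes a well-ordering of length $\omega_1^{A}$, which is then hyperarithmetic in $B$), so the sequence would yield an infinite strictly descending sequence $\omega_1^{A_0} > \omega_1^{A_1} > \cdots$ of ordinals. Following the abstract, however, I want to replace this appeal to admissible ordinals by an appeal to incompleteness, which I expect to be the more informative argument.

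The first step is to build the bridge between the hyperjump and consistency. To each real $X$ I would associate a theory $T_X$ in the language of second-order arithmetic --- say $\ACA_0$ augmented with a fresh set constant together with the atomic diagram of $X$ --- chosen so that $\HJ^{X}$ is hyperarithmetically equivalent to the $\Pi^1_1(X)$-complete object coding $\omega$-provability from $T_X$; intuitively, possessing $\HJ^{X}$ is the same as being able to decide, and to certify the consistency of, the $\omega$-logic theory of $T_X$. Under such a correspondence the hypothesis $\HJ^{A_{n+1}} \leq_H A_n$ should translate into the assertion that $A_n$ hyperarithmetically witnesses the consistency of $T_{A_{n+1}}$, i.e. $T_{A_n} \vdash \mathrm{Con}(T_{A_{n+1}})$ for every $n$.

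A clean piece of the reduction that I am confident in is the following. The set $\mathcal{M} = \{\, Y : Y \leq_H A_n \text{ for some } n \,\}$ is \emph{closed under the hyperjump}: if $Y \leq_H A_{n+1}$ then $\HJ^{Y} \leq_H \HJ^{A_{n+1}} \leq_H A_n$, so $\HJ^{Y} \in \mathcal{M}$. Hence $\mathcal{M}$ is an $\omega$-model of $\Pi^1_1$-comprehension, and since it is an increasing union of the $\beta$-models $\mathrm{HYP}(A_n)$ (the sets hyperarithmetic in $A_n$) one checks it is itself a $\beta$-model; moreover each $\mathrm{HYP}(A_{n+1})$ is coded by a real hyperarithmetic in $\HJ^{A_{n+1}} \leq_H A_n$, so the tower $\mathrm{HYP}(A_0) \ni \mathrm{HYP}(A_1) \ni \mathrm{HYP}(A_2) \ni \cdots$ is an infinite descending chain of $\beta$-models each coded inside the previous one. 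The task is then to convert this genuine infinite descent into a contradiction with incompleteness.

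The hard part --- and the step I expect to be the main obstacle --- is precisely this last conversion. A descending chain of consistency assertions $T_{A_n} \vdash \mathrm{Con}(T_{A_{n+1}})$ does not by itself contradict anything, and the whole infinite sequence is, by design, \emph{not} available as a single real inside any one of the clean models $\mathrm{HYP}(A_n)$ or inside $\mathcal{M}$: no single set coding $(A_n)_n$ need be hyperarithmetic in any $A_k$. To force the contradiction I would use G\"odel's diagonal lemma to fold the indexing into the theory, engineering a single consistent recursively axiomatized theory $T$ for which a \emph{one-step} descent already yields $T \vdash \mathrm{Con}(T)$; the delicate points are making the equivalence ``$\HJ^{X} \leq_H Y$ iff $Y$ certifies $\mathrm{Con}(T_X)$'' provable in the weak base theory and uniform in $X$, and arranging the self-reference so that a single application of the second incompleteness theorem, rather than an (illegitimate) induction down the whole sequence, closes the argument.
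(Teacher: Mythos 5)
Your high-level strategy --- assume a descending sequence exists, produce a consistency proof of the assuming theory inside itself, and contradict G\"odel's second incompleteness theorem --- is the same as the paper's, and you have correctly isolated the main obstacle: the sequence $(A_n)_n$, as a single real, need not belong to any model you build from its members. But the proposal does not actually overcome that obstacle, and the machinery you sketch for doing so does not work. The bridge ``$\HJ^{X}\leq_H Y$ iff $Y$ certifies $\mathrm{Con}(T_X)$'' is not something you can arrange: provability from a theory $T_{A_n}$ containing the diagram of $A_n$ is an r.e.-in-$A_n$ notion, whereas $\HJ^{A_{n+1}}\leq_H A_n$ is a hyperarithmetic reduction, and no diagonal-lemma construction is given that converts the one-step relation into a single recursively axiomatized $T$ with $T\vdash\mathrm{Con}(T)$. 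The paper's resolution is concrete and different: working in $\ACA_0+\DS$ (where $\DS$ asserts the existence of a descending sequence), one takes a countable coded $\beta$-model $\M$, coded by $M$ with $\HJ^{M}$ existing (obtained from $\HJ^{A_2}\leq_H A_1$), containing every $A_n$ for $n\ge 2$, and replaces the $\Pi^1_1$ predicate ``$X$ witnesses $\DS$'' by an \emph{arithmetic} formula $\varphi(M,X)$ asserting that $\M$ believes each step of $X$ is descending. Then $\exists X\,\varphi(M,X)$ is a true $\Sigma^1_1$ sentence with parameter $M$, so any $\beta$-model $\N$ containing $M$ contains a witness $X$, which by absoluteness of the relevant clauses really is, and is seen by $\N$ to be, a witness of $\DS$; hence $\N\models\ACA_0+\DS$ and $\mathsf{Con}(\ACA_0+\DS)$ follows. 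Nothing playing the role of $\varphi(M,X)$ appears in your proposal, and without it the argument does not close.

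Separately, the intermediate claims in your third paragraph are false as stated. Since $A_{n+1}\leq_H\HJ^{A_{n+1}}\leq_H A_n$, the sets $\mathrm{HYP}(A_n)$ are \emph{decreasing} in $n$, so $\mathcal{M}=\{Y: Y\leq_H A_n\text{ for some }n\}$ is just $\mathrm{HYP}(A_0)$; it is not closed under the hyperjump (e.g.\ $\HJ^{A_0}\not\leq_H A_0$), and your closure computation only applies to the intersection $\bigcap_n\mathrm{HYP}(A_n)$. Moreover $\mathrm{HYP}(X)$ is never a $\beta$-model (it believes the Harrison linear order relative to $X$ is a well-order), so the ``descending chain of $\beta$-models $\mathrm{HYP}(A_n)$'' does not exist. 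The correct statement, and the one the paper actually uses, is that $\HJ^{A_{n+1}}$ computes a code for a countable coded $\beta$-model containing $A_{n+1}$, which is therefore hyperarithmetic in $A_n$.
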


This theorem is an immediate consequence of a result of Spector's, namely that if $\HJ^A \leq_H B$ then $\omega_1^A < \omega_1^B$ (so the existence of such a sequence $(A_n)_{n<\omega}$ would imply the existence of a descending sequence $\omega_1^{A_0} > \omega_1^{A_1} > \ldots$ in the ordinals). We provide an alternative proof that makes no mention of admissible ordinals, and which has the additional benefit of showing the theorem is provable in $\ACA_0$.

Here is a brief sketch of how our alternative proof works: Consider the theory $\ACA_0 + \DS$ where $\DS$ is a sentence asserting the existence of a sequence of reals as described in Theorem \ref{thm-main}. We work \emph{inside} the theory and let $A_0, A_1, \ldots$ be such a sequence. $\ACA_0$ proves that if the hyperjump of a real exists then there is a $\beta$-model (a model that is correct for $\Sigma^1_1$ sentences) containing it. In this case $\HJ^{A_1}$ exists so there is a $\beta$-model containing $A_1$. Moreover, since all $A_n$'s for $n \geq 1$ are hyperarithmetical in $A_1$, the $\beta$-model will contain all of them. All $\beta$-models are models of $\ACA_0$ (in fact, $\ATR_0$) so it appears this model is a model of the theory $\ACA_0 + \DS$, meaning that the theory proves its own consistency. By G\"odel's second incompleteness theorem, this implies that $\ACA_0$ proves $\neg \DS$.

There is one problem, however. Just because the model contains all the elements of the sequence $(A_n)_{n \geq 1}$ does not mean it contains the sequence itself (here we are thinking of the sequence as a single real whose slices are the $A_n$'s). Indeed, the sequence itself could be much more complicated than any single real in the sequence. In our proof, we overcome this flaw by showing that if there is a descending sequence then there is a descending sequence that is relatively simple---in fact there is one that is hyperarithmetic relative to $A_1$. This means the $\beta$-model above really does contain a descending sequence.

In \cite{friedman1976uniformly}, H. Friedman uses similar ideas to prove the following theorem originally due to Steel:

\begin{restatable}[Steel]{theorem}{Steel}
\label{Steel}
Let $P\subset \mathbb{R}^2$ be arithmetic. Then there is no sequence $(A_n)_{n<\omega}$ such that for every $n$,
\begin{itemize}
\item[(i)] $A_n\geq_T A'_{n+1}$ and 
\item[(ii)] $A_{n+1}$ is the unique $B$ such that $P(A_n,B)$.
\end{itemize}
\end{restatable}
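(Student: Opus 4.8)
The plan is to reproduce, in the arithmetic setting, the G\"odelian strategy used for Theorem \ref{thm-main}. Fix the arithmetic predicate $P$ and let $\DS_P$ be the sentence asserting that a sequence $(A_n)_{n<\omega}$ satisfying (i) and (ii) exists, coded as a single real $C = \bigoplus_n A_n$ together with witnesses to the Turing reductions in (i). I would work in a true, recursively axiomatized base theory $T$ (the natural first attempt is $T = \ACA_0$) and try to show that $T + \DS_P$ proves the existence of an $\omega$-model of $T + \DS_P$. By G\"odel's second incompleteness theorem $T + \DS_P$ would then be inconsistent, so $T \vdash \neg\DS_P$, and since $T$ is true $\DS_P$ is false, which is the theorem. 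As in the main theorem, all of the work is in producing such a model \emph{inside} the theory.

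First I would record the low-complexity consequences of the hypotheses. From (i) we get $A_n \geq_T A'_{n+1} \geq_T A_{n+1}$, so the degrees descend, and iterating the jump yields $A_0 \geq_T A_n^{(n)}$ for every $n$; in particular every $A_n$ is arithmetic in $A_0$. From (ii), together with the fact that $P$ is arithmetic and that $A_{n+1} \leq_T A_n$, the successor map is \emph{uniformly} arithmetic:
\[
m \in A_{n+1} \iff \exists e\,\bigl[\Phi_e^{A_n}\text{ is total}\wedge P(A_n,\Phi_e^{A_n})\wedge m \in \Phi_e^{A_n}\bigr],
\]
which is $\Sigma^0_j$ in $A_n$ for a fixed $j$ depending only on the complexity of $P$. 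Thus $A_{n+1} \leq_T A_n^{(j)}$ uniformly in $n$. This uniform definability is the analogue of the clause ``the descending sequence is hyperarithmetic in $A_1$'' in the proof of Theorem \ref{thm-main}, and it is the place where hypothesis (ii) is indispensable.

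With these in hand I would build the model. The analogue of ``the hyperjump of $A_1$ exists, so there is a $\beta$-model containing $A_1$'' is: the jumps supplied by (i) and the uniform successor operator let one assemble a countable coded $\omega$-model $M$ of $\ACA_0$ that contains $A_1$ and every real arithmetic in it. Since $\omega$-models are correct for arithmetic formulas, condition (i) and each instance of $P(A_n,A_{n+1})$ transfer between $M$ and the real world, while the genuine uniqueness in (ii) descends to uniqueness among the reals of $M$. Hence, provided the sequence lies in $M$, we obtain $M \models \DS_P$, and $M \models \ACA_0$ by construction, closing the argument.

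\textbf{The main obstacle}, exactly the difficulty the authors flag for Theorem \ref{thm-main}, is getting the sequence \emph{itself}, the single real $C$, into $M$. The uniform bound $A_{n+1} \leq_T A_n^{(j)}$ telescopes to $A_n \leq_T A_1^{(j(n-1))}$, and these levels are unbounded, so $C$ sits at the level of $A_1^{(\omega)}$ rather than at any fixed finite jump; a priori it is arithmetically more complex than anything an $\omega$-model generated from a single real need contain. This is sharper than in the hyperjump case, where one hyperjump already yields a $\beta$-model closed under $\leq_H$: here (i) provides only a single Turing jump per step, so the uniform definability from (ii) must do the work both of manufacturing an $\omega$-model closed under \emph{all} finite jumps and of certifying $C$ as a member of it. The heart of the proof, which I expect to be genuinely delicate, is to carry out the defining recursion for $C$ inside a model of the base theory that the base theory itself can produce---the arithmetic counterpart of the fact that a single hyperjump automatically yields a model of $\ATR_0$. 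Granting this, $M \models \ACA_0 + \DS_P$, so $\ACA_0 + \DS_P$ proves its own consistency and Theorem \ref{Steel} follows from G\"odel's second incompleteness theorem.
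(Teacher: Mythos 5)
There is a genuine gap, and it sits exactly where you place your ``Granting this'': the model is never constructed, and the particular model you aim for cannot be built from the hypotheses. You want a single countable coded $\omega$-model of full $\ACA_0$ containing $A_1$ ``and every real arithmetic in it.'' Closure under the Turing jump would require $A_1^{(k)}$ for every $k$, but condition (i) only yields $A_m^{(k)} \leq_T A_{m-k}$, i.e., the $k$-th jump of $A_m$ is available only by climbing $k$ steps \emph{up} the sequence; no real supplied by the hypotheses computes all finite jumps of any fixed $A_m$, let alone the $\omega$-jump one needs to code such a model. (Indeed, ``every set lies in a countable coded $\omega$-model of $\ACA_0$'' is strictly stronger than $\ACA_0$, so $\ACA_0+\mathsf{DS}_P$ has no business proving it outright.) Your own telescoping computation, which puts the join $C$ at the level of $A_1^{(\omega)}$, is the symptom of this, but the problem already arises for the model itself, before one asks whether $C$ is a member of it. So the step you defer is not a delicate finish; it is the entire proof, and in the form you propose it appears to be unprovable from the given data.

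For comparison: the paper does not prove this theorem itself --- it cites Steel and describes Friedman's argument in \textsection 2 --- and Friedman's route differs from yours in precisely the way needed to dodge this obstacle. He works over $\RCA_0$ rather than $\ACA_0$ and builds $\omega$-models of arbitrarily large \emph{finite} fragments of $T = \RCA_0 + \mathsf{DS}_P$: a model of $k$ levels of comprehension needs only $k$ jumps, which a length-$k$ slide down the sequence (together with the uniform arithmetic definability of the successor that you correctly extract from the uniqueness clause (ii)) does provide. From ``every finite fragment of $T$ has an $\omega$-model,'' $T$ proves $\mathsf{Con}(T)$ by formalized compactness, and second incompleteness finishes as you intend. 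Your high-level G\"odelian frame and your identification of where hypothesis (ii) is indispensable are both right; what is missing is the finite-fragment device (or some substitute) that makes the model-building actually go through.
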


In these proofs we move from the second incompleteness theorem to the well-foundedness (or near well-foundedness) of recursion-theoretic jump hierarchies. In fact, the implication goes in both directions: the well-foundedness of appropriate jump hierarchies entails semantic versions of the second incompleteness theorem. For example, theorem \ref{thm-main} yields a simple and direct proof of the following semantic version of the second incompleteness theorem originally due to Mummert and Simpson (recall that $\Lang_2$ is the standard two-sorted language of second order arithmetic):

\begin{restatable}[Mummert--Simpson]{theorem}{ms}
\label{ms}
Let $T$ be a recursively axiomatized $\Lang_2$ theory. For each $n \geq 1$, if there is a $\beta_n$-model of $T$ then there is a $\beta_n$-model of $T$ which contains no countable coded $\beta_n$-models of $T$.
\end{restatable}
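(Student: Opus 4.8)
The plan is to prove the statement by contradiction, via an infinite-descent argument that feeds directly into Theorem~\ref{thm-main}. Fix $n \geq 1$ and suppose that there is a $\beta_n$-model of $T$ but that \emph{every} $\beta_n$-model of $T$ contains, as an element, a code for a countable $\beta_n$-model of $T$. Starting from any $\beta_n$-model $M_0$ of $T$, I would extract a code $C_1 \in M_0$ for a countable $\beta_n$-model $M_1$ of $T$; since $M_1$ is again a genuine $\beta_n$-model of $T$, the assumption applies to it and yields a code $C_2 \in M_1$ for a countable $\beta_n$-model $M_2$ of $T$, and so on. Iterating (using dependent choice in the metatheory) produces an infinite tower of countable coded $\beta_n$-models with codes $C_k$ satisfying $C_{k+1} \in M_k$ for every $k$.

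The crux is a recursion-theoretic lemma converting this tower into a descending sequence in a jump hierarchy. For $n=1$ I would prove: if $C$ codes a countable $\beta$-model $M$ and $B \in M$, then $\HJ^B \leq_H C$. Indeed, since $M$ is $\Sigma^1_1$-correct, whenever a tree $\Phi_e^B$ (recursive in $B \leq_T C$) is ill-founded, the true $\Sigma^1_1$ statement asserting the existence of an infinite branch holds in $M$, so such a branch already appears as one of the columns of $C$; hence ``$\Phi_e^B$ is ill-founded'' is arithmetic in $C$ uniformly in $e$, and therefore $\HJ^B = \{e : \Phi_e^B \text{ is well-founded}\}$ is arithmetic in $C$, in particular $\HJ^B \leq_H C$. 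Applying this with $M = M_k$, $C = C_k$ and $B = C_{k+1} \in M_k$ gives $\HJ^{C_{k+1}} \leq_H C_k$ for every $k$, so $(C_k)_{k \geq 1}$ is precisely a sequence of the kind forbidden by Theorem~\ref{thm-main}. This contradiction yields the $n=1$ case.

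To make the iteration legitimate I also need ``$C$ codes a $\beta_n$-model of $T$'' to be absolute between $\beta_n$-models and the real world, so that the $M_k$ I produce are genuine $\beta_n$-models of $T$ and the assumption can be reapplied at each stage. For $n=1$ this holds because ``$C$ codes a $\beta$-model'' unwinds to a Boolean combination of $\Sigma^1_1(C)$ conditions (the arithmetic-in-$C$ search for a branch must agree with genuine ill-foundedness of each $\Phi_e^B$), while ``$C$ codes a model of the recursive theory $T$'' is computed by the absolute Tarskian satisfaction recursion available in any model of $\ACA_0$; since $\beta$-models satisfy $\ATR_0$, they decide both correctly.

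The general case $n \geq 1$ runs along identical lines, with the hyperjump $\HJ$ replaced by the $\Sigma^1_n$-analogue of Kleene's $\HJ$, hyperarithmetic reducibility replaced by $\Delta^1_n$-reducibility, and $\Sigma^1_1$-correctness replaced by $\Sigma^1_n$-correctness, the descent then contradicting the $\beta_n$-analogue of Theorem~\ref{thm-main} that the same incompleteness-based method produces. I expect the main obstacle to be exactly the recursion-theoretic lemma: showing that the code of a countable $\beta_n$-model computes (up to $\Delta^1_n$) the $n$-th hyperjump of each of its members. This is the step that lets the purely semantic descent among models be replayed as a descent in the jump hierarchy governed by Theorem~\ref{thm-main}.
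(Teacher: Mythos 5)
Your argument for $n=1$ is essentially the paper's: build the nested tower of countable coded $\beta$-models, observe that the code $C$ of a $\beta$-model arithmetically (hence hyperarithmetically) computes the hyperjump of each of its members because $\Sigma^1_1$-correctness forces a witness to each true ill-foundedness statement to appear among the columns of $C$, and feed the resulting sequence $(C_k)$ into Theorem \ref{thm-main}. That part is correct.

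The gap is in your treatment of $n\geq 2$. You propose to replace the hyperjump by a ``$\Sigma^1_n$-analogue,'' $\leq_H$ by $\Delta^1_n$-reducibility, and Theorem \ref{thm-main} by an unstated ``$\beta_n$-analogue'' which you assert ``the same incompleteness-based method produces''; you even flag the corresponding recursion-theoretic lemma as an expected obstacle. None of this is established in the paper, and you would owe a full proof of that analogue (including a $\beta_n$-version of Proposition \ref{hyperjump} tying the $\Sigma^1_n$-jump to countable coded $\beta_n$-models), a nontrivial undertaking whose behavior for $n\geq 2$ is sensitive to set-theoretic hypotheses. The intended argument needs none of this machinery: for $n\geq 1$ every $\beta_n$-model is in particular a $\beta$-model, so each $M_k$ in your tower is $\Sigma^1_1$-correct and your own $n=1$ lemma already yields $\HJ^{C_{k+1}}\leq_H C_k$ for every $k$. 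The very same sequence $(C_k)$ therefore contradicts Theorem \ref{thm-main} exactly as stated, with no new jump operator and no new well-foundedness theorem. (Your absoluteness worries are likewise unnecessary under the natural reading of the contradiction hypothesis, in which ``contains a countable coded $\beta_n$-model of $T$'' refers to genuine $\beta_n$-models of $T$, so the hypothesis reapplies to a genuine $\beta_n$-model at each stage of the iteration.)
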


In fact, our proof sharpens the Mummert-Simpson result somewhat by dropping the requirement that $T$ be recursively axiomatized.

A different semantic version of the second incompleteness theorem also follows from theorem \ref{Steel}, as observed by Steel in \cite{steel1975descending}. Namely, the following:

\begin{restatable}[Steel]{theorem}{Steelincompleteness}
\label{Steel2}
Let $T$ be an arithmetically axiomatized $\Lang_2$ theory extending $\ACA_0$. If $T$ has an $\omega$-model then $T$ has an $\omega$-model which contains no countable coded $\omega$-models of $T$.
\end{restatable}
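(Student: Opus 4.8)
The plan is to mimic the argument sketched for Theorem \ref{thm-main}, but now applied to $\omega$-models and the jump hierarchy supplied by Theorem \ref{Steel}. Suppose toward a contradiction that $T$ is an arithmetically axiomatized $\Lang_2$ theory extending $\ACA_0$, that $T$ has an $\omega$-model, but that \emph{every} $\omega$-model of $T$ contains a countable coded $\omega$-model of $T$. The goal is to manufacture from this assumption a descending sequence $(A_n)_{n<\omega}$ of the sort forbidden by Theorem \ref{Steel}, using the arithmetic relation $P$ to encode the passage from an $\omega$-model to a coded $\omega$-model it contains.

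First I would set up the right arithmetic relation $P \subseteq \mathbb{R}^2$. Since $T$ extends $\ACA_0$ and is arithmetically axiomatized, the property ``$X$ codes an $\omega$-model of $T$'' is arithmetic in $X$ (it is a conjunction: $X$ codes an $\omega$-model, and each axiom of $T$ holds in it, which over $\ACA_0$ is an arithmetic condition on the code). The difficulty is that Theorem \ref{Steel} requires $A_{n+1}$ to be the \emph{unique} $B$ with $P(A_n, B)$, whereas a given $\omega$-model will in general contain many coded $\omega$-models of $T$. So the key step is to enforce uniqueness: I would let $P(A, B)$ assert that $B$ is the \emph{leftmost} (or $\leq_T$-least in some canonical enumeration, or least under a fixed arithmetic well-ordering of codes) element coded inside the $\omega$-model coded by $A$ which is itself a code for an $\omega$-model of $T$. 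Picking out the leftmost such witness is an arithmetic operation on $A$, so $P$ remains arithmetic, and now for each $A$ coding an $\omega$-model of $T$ there is exactly one $B$ with $P(A,B)$.

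Next I would build the sequence. Start from an $\omega$-model of $T$; by hypothesis it contains a coded $\omega$-model of $T$, so there is a code $A_0$ for an $\omega$-model of $T$ (one can take $A_0$ to be such a code obtained from the ambient model, or simply start the recursion from any code witnessing the hypothesis). Given $A_n$ coding an $\omega$-model of $T$, let $A_{n+1}$ be the unique $B$ with $P(A_n,B)$, i.e.\ the leftmost coded $\omega$-model of $T$ inside $A_n$. Since $A_{n+1}$ is itself a code for an $\omega$-model of $T$, the construction continues, producing an infinite sequence. Condition (ii) of Theorem \ref{Steel} holds by the uniqueness built into $P$. For condition (i), I need $A_n \geq_T A_{n+1}'$: since $A_{n+1}$ is a real coded inside the $\omega$-model $A_n$, it is recursive in $A_n$, and in fact the whole code is recoverable from $A_n$ together with a single number (its index), so $A_{n+1} \leq_T A_n$; strengthening this to $A_{n+1}' \leq_T A_n$ requires that the model coded by $A_n$ be closed under the Turing jump, which it is because $\omega$-models of $\ACA_0$ are closed under $\leq_T$ and under the jump, so $A_{n+1}'$ is again coded in $A_n$ and hence recursive in $A_n$.

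The main obstacle I anticipate is precisely the verification of clause (i), $A_n \geq_T A_{n+1}'$, together with the bookkeeping needed to make the leftmost-witness selection genuinely arithmetic and genuinely unique at every stage. Getting $A_{n+1} \leq_T A_n$ is routine from the coding, but the jump in (i) is doing real work: it is what forces $T$ to extend $\ACA_0$ rather than merely $\RCA_0$, since I rely on the ambient $\omega$-model $A_n$ being closed under the Turing jump to conclude that $A_{n+1}'$ is itself an element (equivalently, recursive in the code $A_n$). Once (i) and (ii) are in place, the forbidden descending sequence exists, contradicting Theorem \ref{Steel}, and the contrapositive yields the conclusion that some $\omega$-model of $T$ contains no countable coded $\omega$-model of $T$.
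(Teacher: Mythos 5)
Your overall strategy---reducing to Theorem \ref{Steel} by iterating the hypothesis that every $\omega$-model of $T$ contains a coded one, forcing single-valuedness by a least-index selection, and obtaining clause (i) from the closure of $\omega$-models of $\ACA_0$ under the Turing jump---is exactly the derivation the paper attributes to Steel; the paper itself does not reprove this theorem but only cites it, and the proof it does write out is for the $\beta$-model analogue (Theorem \ref{minimalmodel}), where no uniqueness clause is needed because Theorem \ref{thm-main}, unlike Theorem \ref{Steel}, has no such requirement. There is, however, a genuine gap at the one point where your argument needs a real idea: the claim that ``$X$ codes an $\omega$-model of $T$'' is arithmetic in $X$. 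For a \emph{single} sentence $\varphi$, ``$M \vDash \varphi$'' is arithmetic in $M$, but its complexity grows with the quantifier depth of $\varphi$; the uniform satisfaction relation for countable coded $\omega$-models is $\Delta^1_1$ and provably \emph{not} arithmetic (feeding a recursive code of the minimal $\omega$-model into an alleged arithmetic satisfaction predicate would yield an arithmetic truth definition for first-order arithmetic, contradicting Tarski). Since an arithmetically axiomatized $T$ may have axioms of unbounded quantifier complexity, ``every axiom of $T$ holds in the model coded by $X$'' is a $\Pi^1_1$ (or, via valuations, $\Sigma^1_1$) condition, not an arithmetic one, so the relation $P$ you define does not literally fall under Theorem \ref{Steel}.

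The standard repair is to relativize the satisfaction witnesses to the ambient model: let $P(A,B)$ say that $A$ codes an $\omega$-model of $\ACA_0$ (arithmetic, since $\ACA_0$ is finitely axiomatizable) and that $B=(A)_i$ for the least $i$ such that for every axiom $\varphi$ of $T$ there is a $j$ with $(A)_j$ a Tarskian valuation of $\varphi$ over the structure coded by $(A)_i$ which declares $\varphi$ true. The Tarski clauses and the axiom set of $T$ are arithmetic, and the set quantifier over valuations has become a number quantifier over columns of $A$, so this $P$ is arithmetic; it is also extensionally correct along your sequence, because valuations over an $\omega$-structure are unique and each valuation of $\varphi$ over $(A)_i$ is arithmetic in $(A)_i$, hence appears as a column of $A$ whenever $A$ codes an $\omega$-model of $\ACA_0$ containing $(A)_i$. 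With this modification the rest of your argument---the base case, the recursion, clause (i), and the contradiction with Theorem \ref{Steel}---goes through as you describe.
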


These results all point to a general connection between incompleteness and well-foundedness. Elucidating this connection is the central goal of this paper. Though many of the theorems we prove could also be proved from the application of known methods, we believe that the new techniques are more conducive to achieving our central goal. Additionally, our techniques are able to prove somewhat sharper results than the original methods.

We also investigate directly the well-founded hierarchy at the center of theorem \ref{thm-main}. It follows from that theorem that the relation $A \prec B$ defined by $\HJ^A \leq_H B$ is a well founded partial order. We call the $\prec$ rank of a real its \emph{Spector rank}. There is a recursion-theoretically natural characterization of the Spector ranks of reals:

\begin{restatable}{theorem}{ranks}
\label{thm-ranks}
For any real $A$, the Spector rank of $A$ is $\alpha$ just in case $\omega_1^A$ is the $(1 + \alpha)^{\text{th}}$ admissible ordinal.
\end{restatable}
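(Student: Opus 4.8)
The plan is to reformulate the statement so that it becomes tractable by transfinite induction. Write $\prec$ for the Spector ordering, so that $B \prec A$ means $\HJ^B \le_H A$, and let $\rank$ denote its rank function. For a real $A$ set $i(A) = \operatorname{otp}\{\nu : \nu\text{ is admissible and }\omega < \nu < \omega_1^A\}$. Since $\omega_1^A$ is itself admissible and strictly larger than $\omega$ for every real $A$, and since the least admissible ordinal greater than $\omega$ is $\omega_1^{\mathrm{CK}}$, the indexing ``$(1+\alpha)^{\text{th}}$ admissible ordinal'' is arranged exactly so that ``$\omega_1^A$ is the $(1+\alpha)^{\text{th}}$ admissible ordinal'' holds precisely when $i(A) = \alpha$; the leading $1$ accounts for $\omega_1^{\mathrm{CK}}$ being the first value of the form $\omega_1^X$. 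By Sacks' theorem the ordinals of the form $\omega_1^X$ are exactly the countable admissible ordinals above $\omega$, so $i$ ranges over precisely the possible values of $\rank$. Thus it suffices to prove $\rank(A) = i(A)$ for all $A$, and I would do this by transfinite induction on the ordinal $\omega_1^A$.

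For the inductive step, fix $A$ and assume $\rank(B) = i(B)$ for all $B$ with $\omega_1^B < \omega_1^A$. The inequality $\rank(A) \le i(A)$ is the easy half and uses only Spector's theorem as quoted above: if $B \prec A$ then $\HJ^B \le_H A$, hence $\omega_1^B < \omega_1^A$, so $\omega_1^B$ is one of the admissibles counted by $i(A)$ while the admissibles below $\omega_1^B$ have order type $i(B)$; therefore $i(B) + 1 \le i(A)$. The induction hypothesis gives $\rank(B) + 1 = i(B) + 1 \le i(A)$, and taking the supremum over all predecessors $B$ yields $\rank(A) \le i(A)$.

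For the reverse inequality $\rank(A) \ge i(A)$ it suffices to produce, for each admissible $\nu$ with $\omega < \nu < \omega_1^A$, a real $B \prec A$ with $\omega_1^B = \nu$: the induction hypothesis then gives $\rank(B) = i(B) = \operatorname{otp}\{\mu : \mu\text{ admissible},\ \omega < \mu < \nu\}$, and as $\nu$ varies these values exhaust all ordinals below $i(A)$, forcing $\rank(A) \ge i(A)$. The whole theorem therefore comes down to the following realization lemma: \emph{for every real $A$ and every admissible $\nu$ with $\omega < \nu < \omega_1^A$ there is a real $B$ with $\HJ^B \le_H A$ and $\omega_1^B = \nu$.}

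I expect this lemma to be the main obstacle, and I would attack it with two ingredients. The first is the fact (essentially Spector's) that $\omega_1^{\HJ^X}$ is always the least admissible ordinal exceeding $\omega_1^X$; applied to a candidate $B$ with $\omega_1^B = \nu$ this shows $\omega_1^{\HJ^B} \le \omega_1^A$, so there is no ordinal obstruction to the desired bound $\HJ^B \le_H A$. The second is an effective form of Sacks' theorem on countable admissible ordinals: because $\nu < \omega_1^A$, a presentation of $\nu$ together with the forcing data needed to build a real with prescribed $\omega_1$ is hyperarithmetic in $A$, so Sacks' construction can be carried out keeping both $B$ and its hyperjump hyperarithmetic in $A$. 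The subtle case is when $\nu$ is the largest admissible below $\omega_1^A$, so that $\omega_1^{\HJ^B} = \omega_1^A$ exactly; here the inequality $\HJ^B \le_H A$ is not forced by ordinal considerations and must be extracted from the construction itself. Confirming that Sacks' forcing delivers the hyperjump bound uniformly, including at this boundary, is the step I would expect to require the most care.
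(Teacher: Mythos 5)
Your reduction of the theorem to the identity $\rank(A) = i(A)$, the induction set-up, and the easy inequality $\rank(A) \le i(A)$ via clause (1) of Spector's theorem are all correct and match the paper's argument in substance (the paper inducts on the rank $\alpha$ rather than on $\omega_1^A$, but the content is the same). The gap is in the realization lemma, which you correctly isolate as the crux but do not prove: you propose to extract the bound $\HJ^B \le_H A$ from the internals of Sacks' forcing construction, and you yourself flag the boundary case, where $\nu$ is the largest admissible below $\omega_1^A$, as unresolved. The missing idea is the second clause of Spector's theorem as quoted in the paper (Theorem \ref{spector}): if $B \le_H A$ and $\omega_1^B < \omega_1^A$, then $\HJ^B \le_H A$. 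With this in hand the realization lemma is immediate. The effective form of Sacks' theorem (Theorem \ref{sacks}) already gives some $B \le_H A$ with $\omega_1^B = \nu$, and since $\nu < \omega_1^A$, clause (2) yields $\HJ^B \le_H A$. No analysis of the forcing is needed, and the boundary case is not special: clause (2) makes no reference to $\omega_1^{\HJ^B}$, so the fact that $\omega_1^{\HJ^B}$ may equal $\omega_1^A$ there is irrelevant. Your first ingredient (that $\omega_1^{\HJ^X}$ is the least admissible above $\omega_1^X$) is likewise unnecessary. Once you replace the proposed forcing analysis with an appeal to Theorem \ref{spector}(2), your proof goes through and coincides with the paper's.
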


It follows, assuming suitable large cardinal hypotheses, that, on a cone, the Spector rank of $X$ is $\omega_1^X$. 

Here is our plan for the rest of the paper. In \textsection{\ref{well_foundedness}} we describe related research. In \textsection{\ref{proof}} we prove the main theorem. In \textsection{\ref{incompleteness}} we provide an alternative proof of the Mummert-Simpson theorem. In \textsection{\ref{ranks}} we turn to the calculation of Spector ranks.

\section{Second Incompleteness \& Well-Foundedness}
\label{well_foundedness}

The second incompleteness theorem implies the well-foundedness of various structures (in particular, sequences of models). In turn, the well-foundedness of structures sometimes yields a semantic version of the second incompleteness theorem (in the form of a minimum model theorem). It is worth emphasizing that the former argument does not rely on the theory of transfinite ordinals and the latter argument does not rely on self-reference or fixed point constructions. This point allows us to sharpen certain results. Because we avoid the use of ordinals, we can verify that Theorem \ref{thm-main} is provable in $\ACA_0$; because we avoid self-reference, we can drop the restriction in the statement of Theorem \ref{ms} that $T$ be recursively axiomatized.

We will now describe both types of arguments, describe their historical antecedents, and point to related research.

\subsection{Well-foundedness via incompleteness}

\leavevmode

To derive well-foundedness from incompleteness we work in the theory $T$ + ``there is a descending sequence,'' where $T$ is sound and sufficiently strong. We build a model of $T$ containing a tail of the sequence, yielding a consistency proof of $T$ + ``there is a descending sequence'' \emph{within the theory} $T$ + ``there is a descending sequence.'' By the second incompleteness theorem, this means that $T$ proves that there are no descending sequences.

The main difficulties lie in building a model that is correct enough that if a descending sequence is in the model, the model knows it is descending and in finding a $T$ that is strong enough to prove the model exists but weak enough that the model built satisfies it.

As far as we know, the first arguments of this type are due to H. Friedman. We were inspired, in particular, by H. Friedman's \cite{friedman1976uniformly} proof of a theorem originally due to Steel \cite{steel1975descending}.

\Steel*

Steel's proof is purely recursion-theoretic, whereas Friedman's proof appeals to the second incompleteness theorem. In particular, Friedman supposes that there is an arithmetic counter-example $P$ to Steel's Theorem. He then works in the theory $T:= \RCA +\textrm{``$P$ produces a descending sequence}$'' and uses $P$ to build $\omega$-models of arbitrarily large fragments of $T$. This yields a proof of $\mathsf{Con}(T)$ in $T$, whence $T$ is inconsistent by G\"odel's second incompleteness theorem.

Recently, Pakhomov and the second named author developed proof-theoretic applications of this technique in \cite{pakhomov2018reflection}. They show that there is no sequence $(T_n)_{n<\omega}$ of $\Pi^1_1$ sound extensions of $\ACA_0$ such that, for each $n$, $T_n$ proves the $\Pi^1_1$ soundness of $T_{n+1}$. This result is proved by appeal to the second incompleteness theorem, though it could be proved by showing that a descending sequence $(T_n)_{n<\omega}$ of theories would induce a descending sequence in the ordinals (namely, the associated sequence of proof-theoretic ordinals). They also show that, ``on a cone,'' the rank of a theory in this well-founded ordering coincides with its proof-theoretic ordinal. These results are strikingly similar to the main theorems of this paper.

\subsection{Incompleteness via well-foundedness}

\leavevmode

Here is an informal argument for incompleteness via well-foundedness. Suppose that second incompleteness fails, i.e.\ that a consistent $T$ proves its own consistency. If $T$ also proves the completeness theorem, then every model $\M$ of $T$ has (what it is by the lights of $\M$) a model within it. This produces a nested sequence of models. If these models can be indexed by ordinals, then this produces a descending sequence of ordinals. So the well-foundedness of the ordinals produces some form of the second incompleteness theorem. If we know that the models form a well-founded structure, we can argue directly, without the detour through the ordinals.

To sharpen this argument one must know that the objects that are ``models of $T$'' in the sense of $\M$ are genuinely models of $T$. So one must restrict one's attention to structures that are sufficiently correct. In addition, one must clarify the relation by which the models are being compared and prove that it is well-founded.

An early argument of this sort is attributed to Kuratowski (see \cite{kennedy2015incompleteness, kripke2009collapse}). Set theory cannot prove the following strong form of the consistency of set theory: that there is an $\alpha$ such that $V_\alpha$ is a model of set theory. For if it does then there is $\alpha$ such that $V_\alpha$ is a model of set theory. Since $V_\alpha$ is a model of set theory, there is also a $\beta<\alpha$ such that $V_\beta$ is a model of set theory. Iterating this argument produces an infinite descending sequence of ordinals. Contradiction.

Steel has also developed an argument of this sort. Using his Theorem \ref{Steel}, he demonstrates that if an arithmetically axiomatized theory of second order arithmetic extends $\ACA_0$ and has an $\omega$-model then it has an $\omega$-model which contains no countable coded $\omega$-models of the theory.

\subsection{Kripke structures}

\leavevmode

We conclude this discussion of related work with the following observation. Formalized in the language of modal logic, the statement of G\"odel's second incompleteness theorem characterizes well-founded Kripke frames. Indeed, the formalization corresponds to the least element principle:
$$\diamondsuit\varphi \rightarrow \diamondsuit(\varphi\wedge\neg\diamondsuit\varphi).$$
Its contrapositive (writing $\psi$ for $\neg\varphi$) is a modal formalization of L\"ob's theorem which corresponds to induction\footnote{Note that since we replaced $\varphi$ with $\lnot \varphi$ before taking the contrapositive, the two modal statements are equivalent only as \emph{schemas}.}:
$$\Box (\Box\psi \rightarrow \psi) \rightarrow \Box \psi$$
Beklemishev has suggested that this observation is connected with ordinal analysis. In \cite{beklemishev2004provability}, he uses a modal logic of provability known as $\mathsf{GLP}$ to develop both an ordinal notation system for $\varepsilon_0$ and a novel consistency proof of $\mathsf{PA}$.

\section{The Main Theorem}
\label{proof}

In this section we provide our alternative proof of Theorem \ref{thm-main}.

\subsection{Outline of proof}

\leavevmode

In broad strokes, here is our strategy. We will consider a statement $\DS$ which states that there \emph{is} a descending sequence in the hyperjump hierarchy. We then work in the theory $\ACA_0 + \DS$ and derive the statement $\mathsf{Con}(\ACA_0 + \DS)$. By G\"odel's second incompleteness theorem, this implies that there is a proof of $\neg\DS$ in $\ACA_0$.

To derive $\mathsf{Con}(\ACA_0 + \DS)$ in $\ACA_0 + \DS$, we use the hyperjump of a real to construct a coded $\beta$-model of $\ACA_0$ containing that real. In particular, if we are given a descending sequence then we can use the existence of the hyperjump of the second real in the sequence to find a $\beta$-model containing all the elements of the tail of the sequence. The point is that the tail of a descending sequence is again a descending sequence and $\beta$-models are correct enough to verify this.

The only problem is that while the $\beta$-model we found contains all the elements of the tail it may not contain the tail itself (i.e.\ it may not contain the recursive join of all the elements of the tail). Our strategy to fix this is to show that there is a family of descending sequences which is arithmetically definable relative to some parameter whose hyperjump exists. A $\beta$-model containing this parameter must contain an element of this family (because $\beta$-models contain witnesses to all $\Sigma^1_1$ statements).

For the parameter, we will use a countable coded $\beta$-model which contains each element of a tail of the original descending sequence. The arithmetic formula will then essentially say that the $\beta$-model believes each step along the sequence is descending. The point is that we have replaced a $\Pi^1_1$ formula saying the sequence is descending by an arithmetic formula talking about the truth predicate of some coded model and $\beta$-models are correct enough that this does not cause any errors.

The $\beta$-model will just come from the existence of the hyperjump of some element of the original sequence, and we can guarantee the hyperjump of the model exists by taking one more step down the original descending sequence.

\subsection{Useful facts}

\leavevmode

In this section, we record the facts about $\beta$-models that we will use in the proof of the main theorem. Unless otherwise noted, proofs of all propositions in this section can be found in \cite{simpson_2009}.

\begin{definition}
A $\beta$-model is an $\omega$-model $\M$ of second order arithmetic such that for any $\Sigma^1_1$ sentence $\varphi$ with parameters in $\M$, $\M \vDash \varphi$ if and only if $\varphi$ is true.
\end{definition}

\begin{proposition}[\cite{simpson_2009}, Lemma VII.2.4, Theorem VII.2.7] \label{beta}  Provably in $\ACA_0$, all countable coded $\beta$-models satisfy $\ATR_0$ (and hence also $\ACA_0$).
\end{proposition}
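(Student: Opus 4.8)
The plan is to verify, inside $\ACA_0$ and for a fixed countable coded $\beta$-model $\M$ with code $X$, each instance of $\ATR_0$ separately. I would use the formulation of $\ATR_0$ as: for every $Z \in \M$, every arithmetic operator $\Theta$, and every linear order $\prec \in \M$ with $\M \vDash \text{``}\prec \text{ is a well-ordering''}$, the transfinite iteration of $\Theta$ along $\prec$ starting from $Z$ exists. The first, entirely mechanical, observation is that since $\M$ is $\beta$-correct and ``$\prec$ is a well-ordering'' is $\Pi^1_1$, the order $\prec$ really is well-ordered; likewise $\M \vDash \ACA_0$ follows at once, because the existence of a single Turing jump is a true $\Sigma^1_1$ fact that $\ACA_0$ itself proves and that $\beta$-correctness then imports into $\M$.

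The engine of the argument is a least-counterexample induction along the genuine well-ordering $\prec$. The key enabling fact is that, because $\M$ is a fixed coded $\omega$-model, the satisfaction relation $\M \vDash \psi$ for a formula $\psi$ of bounded complexity is arithmetic in $X$, so the set $D = \{a \in \operatorname{field}(\prec) : \M \vDash \exists Y\,(\text{``}Y\text{ is the }\Theta\text{-hierarchy along }\prec\!\restriction\! a\text{''})\}$ exists by arithmetic comprehension. If $D = \operatorname{field}(\prec)$ we are done; otherwise $\prec$-minimality gives a least $a \notin D$, and I would derive a contradiction by showing $\M$ can take one more step. At a successor this is immediate from $\M \vDash \ACA_0$; at a limit one must collect the hierarchies below $a$, which lie in $\M$ one by one, into a single member of $\M$. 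This collection step is exactly where $\Sigma^1_1$ choice is needed, so I would first establish, again via $\beta$-correctness, that $\M \vDash \Sigma^1_1\text{-}\mathsf{AC}_0$.

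An alternative, and perhaps cleaner, route replaces $\ATR_0$ by the equivalent principle of $\Sigma^1_1$ separation. Given $\Sigma^1_1$ formulas $\varphi_0, \varphi_1$ with parameters in $\M$ that $\M$ believes disjoint, $\beta$-correctness makes them genuinely disjoint; the classical Gandy--Kreisel comparison of the Kleene--Brouwer orderings of the associated trees produces a separating set $S$ that is a $\Sigma^1_1$-singleton over the parameters, and $\beta$-correctness then pins $S$ down as the unique witness inside $\M$. Either route ultimately rests on the same reflection principle: $\M$ is correct about the $\Sigma^1_1$ and $\Pi^1_1$ facts that govern well-foundedness.

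The main obstacle---and the reason the result is not completely trivial---is the demand that everything be carried out in $\ACA_0$ rather than in a metatheory that already proves $\ATR_0$. One cannot simply argue ``the hierarchy (resp.\ separating set) genuinely exists, hence by $\beta$-correctness it lies in $\M$,'' because $\ACA_0$ does not prove that such objects exist in the universe: already the $\omega$-th jump need not exist. The care required is therefore to keep every existence claim internal to the fixed coded model $\M$, reasoning only about its arithmetic-in-$X$ satisfaction relation and exploiting $\Sigma^1_1$ and $\Pi^1_1$ correctness together with $\Sigma^1_1$ choice, so that the induction along $\prec$ is legitimate without the metatheory ever performing an unbounded transfinite recursion of its own.
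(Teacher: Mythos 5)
The paper offers no proof of this proposition---it is imported from Simpson's book with a citation---so I am measuring your sketch against the standard argument. Your overall skeleton is the right one: use $\beta$-correctness to see that $\prec$ is genuinely well-ordered and that $\M \vDash \ACA_0$, observe that satisfaction in a coded $\omega$-model is arithmetic in its code so that the set $D$ of stages at which $\M$ possesses the partial hierarchy exists by arithmetic comprehension, and run a least-counterexample induction along the genuine well-order. You also correctly diagnose, in your final paragraph, the one place where care is needed: $\ACA_0$ cannot certify that the relevant objects exist in the ambient universe, so one cannot simply reflect them into $\M$.

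The gap is at exactly the step you identify as the crux. To get past a limit stage $a$ you propose to first establish $\M \vDash \Sigma^1_1\text{-}\mathsf{AC}_0$ ``again via $\beta$-correctness.'' But the pattern of $\beta$-correctness you have been using---reflect a true $\Sigma^1_1$ fact, provable in $\ACA_0$, into $\M$---cannot deliver this: the conclusion of an instance of $\Sigma^1_1$ choice is not something $\ACA_0$ proves true in the ambient universe, so there is nothing to reflect. This is precisely the trap your own closing paragraph warns against. The missing idea is to build the limit object \emph{outside} $\M$, arithmetically in the code of $\M$: for each $b \prec a$ the partial hierarchy $Y_b$ is the unique solution of an arithmetic condition and lies in $\M$, so the amalgam $\bigcup_{b \prec a} Y_b$ is arithmetically definable from the code of $\M$ together with the parameters and hence exists by $\ACA_0$; then ``there is a $\Theta$-hierarchy along $\prec$ restricted to $a$'' is a \emph{true} $\Sigma^1_1$ sentence with parameters in $\M$ and so reflects into $\M$. (The same external-construction-then-reflect device, not bare correctness, is also what one needs to prove $\M \vDash \Sigma^1_1\text{-}\mathsf{AC}_0$, if you prefer that formulation.) Your alternative route through $\Sigma^1_1$ separation has a worse version of the same problem: ``there exists a separating set'' is $\Sigma^1_2$, not $\Sigma^1_1$, so $\beta$-correctness does not import it directly, and the Kleene--Brouwer comparison that manufactures the separating set is itself $\ATR_0$-strength reasoning that $\ACA_0$ cannot carry out in the ambient world.
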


\begin{proposition}[\cite{simpson_2009}, Lemma VII.2.9] \label{hyperjump} Provably in $\ACA_0$, for any $X$, $\HJ^X$ exists if and only if there is a countable coded $\beta$-model containing $X$.
\end{proposition}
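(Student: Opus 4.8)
The plan is to prove the two directions separately, both within $\ACA_0$.

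For the easy direction, suppose a countable coded $\beta$-model $\M$ with $X \in \M$ exists; I would show that $\HJ^X$ exists. Fix a $\Pi^1_1$ formula $\psi(e,X)$ defining the hyperjump, so that $\HJ^X = \{e : \psi(e,X)\}$ (recall $\HJ^X$ is by definition a complete $\Pi^1_1(X)$ set). The satisfaction relation of a countable coded $\omega$-model is, for each fixed formula, arithmetically definable from the code, since the set quantifiers of $\psi$ range over the number-indexed columns of $\M$; hence $\{e : \M \vDash \psi(e,X)\}$ exists by arithmetic comprehension. Because $\M$ is an $\omega$-model, $\Sigma^1_1$-correctness passes to $\Pi^1_1$ formulas by negation, so $\M \vDash \psi(e,X)$ iff $\psi(e,X)$; therefore $\{e : \M \vDash \psi(e,X)\} = \HJ^X$, and $\HJ^X$ exists. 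This direction uses only that satisfaction in coded models is arithmetic, together with $\beta$-correctness.

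For the converse, I would work in $\ACA_0 + {}$``$\HJ^X$ exists'' and build a countable coded $\beta$-model containing $X$. The guiding principle is the standard characterization that an $\omega$-model of $\ACA_0$ is a $\beta$-model exactly when it is correct about well-foundedness: for every linear ordering $L$ coded in the model, the model believes $L$ is well-founded iff $L$ really is. Since $\HJ^X$ is $\Pi^1_1(X)$-complete, it decides well-foundedness of every ordering recursive in $X$ and, where an ordering is ill-founded, it computes a descending sequence through it. The plan is therefore to use $\HJ^X$ to generate, by a transfinite recursion, a countable family of reals: start from $X$, close under the Turing join, Turing reducibility, and the Turing jump (to secure $\ACA_0$ via Proposition \ref{beta}'s prerequisites), and adjoin descending sequences through every ill-founded ordering that appears. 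Assembling these into a single real $\M$ by arithmetic comprehension relative to $\HJ^X$ yields the candidate model, which is correct about well-foundedness by construction, hence a $\beta$-model containing $X$.

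The main obstacle is precisely this closure step. When we adjoin a descending sequence $B$ through an ill-founded ordering, $B$ becomes a new element of the model, and $\beta$-correctness now demands correctness about orderings recursive in $B$ as well---which a priori could require the hyperjump of $B$ rather than merely $\HJ^X$. The delicate point is to run the recursion so that it closes off within the reach of a single hyperjump, which I would attempt by choosing the adjoined sequences to be as \emph{low} as possible (a Gandy-basis-style selection adding no new ordinals over $X$), so that the well-foundedness questions arising at later stages remain decidable from $\HJ^X$ and the recursion stabilizes. Verifying that such a selection can be carried out uniformly and arithmetically in $\HJ^X$, and that the resulting $\M$ is genuinely $\Sigma^1_1$-correct, is the technical heart of the argument; the remaining manipulations of coded models are routine in $\ACA_0$.
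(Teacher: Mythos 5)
First, note that the paper does not prove this proposition at all: it is quoted from Simpson's book (Lemma VII.2.9), so you are being compared against the standard textbook argument rather than anything in the text. Your left-to-right direction (coded $\beta$-model $\Rightarrow$ hyperjump exists) is correct and complete: for a fixed $\Pi^1_1$ formula, satisfaction in a countable coded $\omega$-model is arithmetic in the code, so $\{e : \M \vDash \psi(e,X)\}$ exists by arithmetical comprehension, and $\beta$-correctness (which passes from $\Sigma^1_1$ to $\Pi^1_1$ by negation) identifies this set with $\HJ^X$. That is exactly the standard argument.

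The right-to-left direction, however, has a genuine gap, and it is precisely the one you flag yourself as ``the technical heart.'' The whole content of this direction is the closure problem: when you adjoin a witness $B$ to a true $\Sigma^1_1$ statement (equivalently, a descending sequence through an ill-founded ordering), correctness of the final model about $B$ requires deciding $\Pi^1_1(B)$ facts, which a priori needs $\HJ^B$, not $\HJ^X$. Saying that one should choose witnesses ``as low as possible, Gandy-basis-style'' names the right tool but does not constitute a proof; the actual argument must establish, uniformly and provably in $\ACA_0$, that each witness $B$ can be selected (e.g.\ as a suitably chosen branch of the Kleene tree, located using $\HJ^X$) so that $\omega_1^{X \oplus B_0 \oplus \cdots \oplus B_n} = \omega_1^X$ at every stage, whence every $\Sigma^1_1$ fact about finite tuples from the model under construction is decided by $\HJ^X$ and the $\omega$-length recursion is carried out recursively in $\HJ^X$. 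Without this, you have not shown the process stabilizes, and the claim that the resulting $\M$ is $\Sigma^1_1$-correct is unsupported. Two further cautions: you describe the construction as a ``transfinite recursion,'' which is not available in $\ACA_0$ (the correct construction is a recursion along $\omega$ performed computably in $\HJ^X$, which is what makes the proposition $\ACA_0$-provable and yields the refinement the paper actually uses, namely that the code of $\M$ is recursive in $\HJ^X$); and closing under the Turing jump is unnecessary, since Proposition \ref{beta} applies to any countable coded $\beta$-model once $\beta$-ness is established --- what you do need during the construction is closure under join and relative recursion so that the Kleene normal form reduction of $\Sigma^1_1$ formulas to well-foundedness is available inside the model.
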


\begin{proposition}
\label{absoluteness}
All of the following can be written as Boolean combinations of $\Sigma^1_1$ formulas and hence are absolute between $\beta$-models
\begin{enumerate}
    \item $A$ is the hyperjump of $B$.
    \item $A \leq_H B$
    \item $M$ is a countable coded $\beta$-model.
\end{enumerate}
\end{proposition}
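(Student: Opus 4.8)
The plan is to split the statement into two independent pieces: (a) each of (1)--(3) is equivalent to a Boolean combination of $\Sigma^1_1$ formulas, and (b) every such formula is absolute between $\beta$-models. Part (b) is immediate from the definition and carries essentially no content. If $M$ is a $\beta$-model, then for every $\Sigma^1_1$ sentence $\varphi$ with parameters in $M$ we have $M \vDash \varphi$ iff $\varphi$ is true; taking negations, $M$ is also correct for $\Pi^1_1$ sentences, and since the Boolean connectives commute with the passage to truth, $M$ is correct for every Boolean combination of $\Sigma^1_1$ formulas with parameters in $M$. Two $\beta$-models that share the relevant parameters then agree on such a formula, because each agrees with the truth value in $V$. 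So all of the work lies in the complexity computations, and throughout I would use the standard normal-form closure facts that $\Sigma^1_1$ and $\Pi^1_1$ are each closed under $\wedge$, $\vee$, and the number quantifiers $\forall n$ and $\exists n$.

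Items (1) and (3) are then routine bookkeeping. For (1), fix the canonical $\Pi^1_1$ formula $\theta(n,B)$ defining the relativized Kleene $\HJ^B$, so that ``$A = \HJ^B$'' reads $\forall n\,(n \in A \leftrightarrow \theta(n,B))$. The biconditional splits as $(n\in A \to \theta) \wedge (\theta \to n \in A)$, whose first conjunct is $\Pi^1_1$ and second is $\Sigma^1_1$; applying $\forall n$ and distributing it over the conjunction gives a formula of the form $\Pi^1_1 \wedge \Sigma^1_1$, which is a Boolean combination of $\Sigma^1_1$ formulas (here no Tarski obstruction arises, since $\theta$ is a single fixed formula). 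For (3), the clause ``$M$ is a countable coded $\omega$-model'' is arithmetic in $M$, and for $\beta$-correctness it suffices to require that $M$ correctly recognize ill-foundedness of the trees it codes (the standard reformulation of $\Sigma^1_1$-correctness): for every tree $T$ coded in $M$, if $T$ is ill-founded then some column of $M$ is a branch through $T$. Here ``$M$ codes a branch through $T$'' is arithmetic in $M$ (a number quantifier over the columns plus the arithmetic assertion that a column is a branch), while ``$T$ is ill-founded'' is $\Sigma^1_1$, so each instance is $(\Sigma^1_1 \to \text{arithmetic})$, i.e.\ $\Pi^1_1$; the reverse implication is automatic, and the universal number quantifier over coded trees preserves $\Pi^1_1$. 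Conjoined with the arithmetic $\omega$-model clause, (3) is $\Pi^1_1$, a degenerate Boolean combination of $\Sigma^1_1$.

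The main obstacle is (2). The natural characterization is $A \leq_H B$ iff $A \in \Delta^1_1(B)$, i.e.\ $A$ is simultaneously $\Sigma^1_1(B)$ and $\Pi^1_1(B)$; written out with universal $\Sigma^1_1$ and $\Pi^1_1$ predicates this takes the form $\exists e\,(\cdots)$, and the leading number-existential over a conjunction of a $\Pi^1_1$ and a $\Sigma^1_1$ formula does \emph{not} visibly remain in the Boolean-combination class---a crude quantifier count pushes it up to $\Sigma^1_2$. The point that makes it work is the classical theorem (obtained via $\Sigma^1_1$-boundedness and the Spector--Gandy circle of ideas; see \cite{simpson_2009}) that hyperarithmetic reducibility is in fact $\Pi^1_1$, so that $A \leq_H B$ is literally the negation of a $\Sigma^1_1$ formula. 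Concretely, one can instead use $A \leq_H B$ iff $A \leq_T \HJ^B$ together with the observation that, since $\HJ^B$ is a fixed set whose positive graph is $\Pi^1_1$ and whose negative graph is $\Sigma^1_1$, the correctness of the finitely many oracle answers in a Turing reduction splits into a $\Pi^1_1$ part (positive queries) and a $\Sigma^1_1$ part (negative queries); the uniqueness of the true oracle is what lets one absorb the quantifier over computations while keeping the statement a Boolean combination of $\Sigma^1_1$ formulas. I expect controlling this quantifier---rather than the bookkeeping for (1) and (3)---to be the delicate step.
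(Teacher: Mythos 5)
The paper itself gives no proof of this proposition; it is covered by the blanket citation to \cite{simpson_2009} at the start of the subsection. Your decomposition into a complexity computation plus the observation that $\beta$-models are correct for Boolean combinations of $\Sigma^1_1$ formulas (because each such model agrees with the true truth value) is exactly what that citation supplies. Your arguments for (1) and (3) are the standard ones and correct: ``$A = \HJ^B$'' splits into a $\Pi^1_1$ conjunct and a $\Sigma^1_1$ conjunct (using closure of each class under number quantification), and ``$M$ is a countable coded $\beta$-model'' is $\Pi^1_1$ via the tree reformulation. For (2), your primary move---citing the classical theorem that the relation $A \leq_H B$ is $\Pi^1_1$---is also the right one and settles the matter.

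However, the ``concrete'' alternative you sketch for (2) rests on a false equivalence: $A \leq_H B$ is \emph{not} equivalent to $A \leq_T \HJ^B$. The left-to-right direction holds, but $\HJ^B$ is trivially Turing reducible to $\HJ^B$ while $\HJ^B \not\leq_H B$, since $\HJ^B$ is properly $\Pi^1_1(B)$ and $\leq_H$ coincides with membership in $\Delta^1_1(B)$. (The paper depends on this distinction throughout: Spector's Theorem \ref{spector}(1) would be false, taking $A = B$, if $\leq_H$ meant Turing reducibility to the hyperjump.) The correct characterization is $A \leq_H B$ iff $A \leq_T H^B_a$ for some notation $a \in \HJ^B$, and it is precisely the existential quantifier over notations $a$ that the classical $\Pi^1_1$-ness theorem (via $\Sigma^1_1$-boundedness) tames; your oracle-query splitting would not by itself bring the formula into the Boolean-combination class, as you half-acknowledge. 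Since your main route for (2) does not rely on this alternative, the proposal as a whole stands, but that paragraph should be deleted or corrected.
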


\subsection{Proof of the main theorem}

\leavevmode

\main*

\begin{proof}
It suffices to prove the inconsistency of the theory $\ACA_0+\DS$, where
\[
\DS := \exists X \forall n  (\HJ^{X_{n + 1}} \text{ exists and } \mathcal{O}^{X_{n+1}}\leq_H X_n).
\]
To do this, we reason in $\ACA_0+\DS$ and derive $\mathsf{Con}(\ACA_0+\DS)$. The inconsistency of $\ACA_0+\DS$ then follows from G\"odel's second incompleteness theorem.

\bigskip

\noindent\textbf{Reasoning in $\ACA_0+\DS$:} 

\smallskip

Let $A$ witness $\DS$. That is, for all $n$, $\HJ^{A_{n + 1}}$ exists and $\HJ^{A_{n+1}}\leq_H A_n$. Our goal is now to show there is a model of $\ACA_0 + \DS$.

\begin{claim}
There is a countable coded $\beta$-model $\M$ coded by $M$ such that $\HJ^{M}$ exists and $\M$ contains $A_n$ for all sufficiently large $n$.
\end{claim}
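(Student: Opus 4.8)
The plan is to exploit Proposition~\ref{hyperjump} twice, taking advantage of the fact that a tail of a descending sequence is still descending and that we have room to step down the sequence an extra time. First I would fix the witness $A$ to $\DS$ and recall that, by hypothesis, $\HJ^{A_{n+1}}$ exists and $\HJ^{A_{n+1}} \leq_H A_n$ for every $n$. The key observation is that since $\HJ^{A_2} \leq_H A_1$, we have in particular that $\HJ^{A_2}$ exists; by Proposition~\ref{hyperjump} (reasoning inside $\ACA_0$), this yields a countable coded $\beta$-model $\M$, coded by some real $M$, with $A_2 \in \M$. I would then argue that this single model $\M$ already contains $A_n$ for all $n \geq 2$: each such $A_n$ satisfies $A_n \leq_H A_2$ (this follows by iterating the inequalities $\HJ^{A_{k+1}} \leq_H A_k$ and using that $B \leq_H \HJ^B$, so $A_{n} \leq_H \HJ^{A_n} \leq_H A_{n-1} \leq_H \cdots \leq_H A_2$), and a countable coded $\beta$-model that contains a real is closed under $\leq_H$. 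This closure under hyperarithmetic reducibility is exactly what makes $\beta$-models the right tool here, and it is justified by Proposition~\ref{absoluteness}(2) together with the $\beta$-correctness of $\M$.

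The remaining requirement is that $\HJ^M$ itself exists, i.e.\ that the code $M$ of the model is not too complicated. Here is where the extra step down the sequence pays off. The construction in Proposition~\ref{hyperjump} produces the code $M$ of the $\beta$-model hyperarithmetically from (a witness to the existence of) $\HJ^{A_2}$; in particular $M \leq_H \HJ^{A_2}$, and hence $\HJ^{M} \leq_H \HJ^{\HJ^{A_2}}$, so it suffices to know that $\HJ^{\HJ^{A_2}}$ exists. But $\HJ^{A_2} \leq_H A_1$ gives $\HJ^{\HJ^{A_2}} \leq_H \HJ^{A_1}$, and $\HJ^{A_1}$ exists because $\HJ^{A_1} \leq_H A_0$ by the defining property of the descending sequence. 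Thus the hyperjump of $M$ exists, as required. In other words, I would choose the index so that the model is built from $A_2$ (guaranteeing it contains the tail $A_2, A_3, \dots$) while the existence of $\HJ^{A_1}$ and $\HJ^{A_0}$ supplies enough hyperjumps above $M$ to ensure $\HJ^M$ exists.

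I expect the main obstacle to be making the reducibility $M \leq_H \HJ^{A_2}$ genuinely explicit and checking it is available in $\ACA_0$, since the statement of Proposition~\ref{hyperjump} as quoted asserts only the biconditional ``$\HJ^X$ exists iff there is a countable coded $\beta$-model containing $X$'' and does not overtly record a bound on the complexity of the model in terms of $\HJ^X$. To close this gap I would either appeal to the effective content of the cited construction in \cite{simpson_2009} (the standard proof extracts such a coded model hyperarithmetically, indeed arithmetically, from $\HJ^X$), or, more robustly, avoid computing an explicit bound by instead applying Proposition~\ref{hyperjump} with $X = A_1$: since $\HJ^{A_1}$ exists, there is a countable coded $\beta$-model $\M$ containing $A_1$, and any such model automatically contains every $A_n$ with $n \geq 1$ by the same $\leq_H$-closure argument, while a further step gives a model containing $A_2$ together with a hyperjump above its code. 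The delicate point throughout is that all of these reductions and closure properties are $\Sigma^1_1$ or Boolean combinations thereof and therefore absolute to $\beta$-models by Proposition~\ref{absoluteness}, so that $\M$ correctly recognizes the hyperjump relations among the $A_n$; verifying this absoluteness carefully is what guarantees that $\M$ will, in the next stage of the proof, actually witness $\DS$ internally.
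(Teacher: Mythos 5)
Your proposal is correct and follows essentially the same route as the paper: it extracts the effective content of Proposition \ref{hyperjump} (the paper records that the code satisfies $M \leq_T \HJ^{A_2}$, hence $M \leq_H A_1$ and $\HJ^{M}$ exists because $\HJ^{A_1}$ does), and then uses closure of $\beta$-models under $\leq_H$ to capture the tail $A_2, A_3, \dots$. The only caveat is that your fallback suggestion of applying Proposition \ref{hyperjump} to $A_1$ \emph{without} an explicit complexity bound on $M$ would not suffice on its own---with no bound on the code one cannot conclude that $\HJ^{M}$ exists---but your primary argument does not rely on it.
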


The proof of Proposition \ref{hyperjump} in \cite{simpson_2009} actually shows that for any $X$, if $\HJ^X$ exists then $X$ is contained in a countable coded $\beta$-model which is coded by a real that is recursive in $\HJ^X$. So $A_2$ is contained in some countable coded $\beta$-model $\M$, coded by $M$, such that $M \leq_T \HJ^{A_2} \leq_H A_1$. Hence $\HJ^M \leq_T \HJ^{A_1}$. Since $\HJ^{A_1}$ exists, so does $\HJ^{M}$. And since $\M$ is closed under hyperarithmetic reducibility, $\M$ contains $A_n$ for all $n \geq 2$.

\begin{claim}
There is an arithmetic formula $\varphi$ such that
\begin{enumerate}
    \item[(i)] $\exists X\, \varphi(M, X)$
    \item[(ii)] For any $X$, if $\varphi(M, X)$ holds then $X$ is a witness of $\DS$
\end{enumerate}
where $M$ is as in the previous claim.
\end{claim}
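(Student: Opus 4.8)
The plan is to take $\varphi(M,X)$ to be the arithmetic formula (in the parameters $M$ and $X$) asserting that $X$ codes a sequence $(X_n)_{n<\omega}$ all of whose terms are elements of the coded model $\M$ and which $\M$ \emph{believes} to be descending in the hyperjump ordering. Concretely, writing $\theta(Y,Z)$ for the fixed $\Lang_2$ formula ``$\HJ^Z$ exists and $\HJ^Z \leq_H Y$'', I would let $\varphi(M,X)$ say: for every $n$ there are indices $i,j$ with $X_n = (M)_i$ and $X_{n+1} = (M)_j$ and $\mathrm{Sat}_\M(\ulcorner \theta \urcorner, i, j)$, where the last conjunct abbreviates $\M \vDash \theta((M)_i,(M)_j)$. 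The point of the definition is that the genuinely non-arithmetic content of ``$X$ is descending'' has been pushed inside the coded model, where it is witnessed by a satisfaction predicate. Since $\M$ is a countable coded $\omega$-model, satisfaction of any fixed $\Lang_2$ formula with parameters drawn from the columns of $M$ is arithmetic in $M$, and ``$X_n = (M)_i$'' is arithmetic in $M$ and $X$; hence $\varphi$ is arithmetic, as required.

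For (i) I would exhibit the shift of the given witness as the desired $X$: set $X_n := A_{n+2}$, which exists as a real since $A$ does. By the first claim every $A_m$ with $m \geq 2$ lies in $\M$, so each $X_n$ is a column of $M$. It remains to check $\M \vDash \theta(A_{n+2},A_{n+3})$ for every $n$. In the ambient theory $\HJ^{A_{n+3}}$ exists and $\HJ^{A_{n+3}} \leq_H A_{n+2}$; since $\beta$-models are closed under $\leq_H$ and $A_{n+2}\in\M$, we get $\HJ^{A_{n+3}}\in\M$, so $\M$ sees that the hyperjump exists, and $\M \vDash \HJ^{A_{n+3}}\leq_H A_{n+2}$ follows from Proposition \ref{absoluteness}, since both reals lie in $\M$ and the relation genuinely holds. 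Thus $\varphi(M,X)$ holds.

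For (ii) I would run the same absoluteness argument in reverse. If $\varphi(M,X)$ holds then for each $n$ there are $X_n,X_{n+1}\in\M$ with $\M\vDash\theta(X_n,X_{n+1})$. From $\M\vDash$``$\HJ^{X_{n+1}}$ exists'' we obtain a $W\in\M$ with $\M\vDash W = \HJ^{X_{n+1}}$, and Proposition \ref{absoluteness} upgrades this to the genuine fact $W=\HJ^{X_{n+1}}$; likewise $\M\vDash W\leq_H X_n$ yields the genuine $\HJ^{X_{n+1}}\leq_H X_n$. As this holds for every $n$, $X$ is a witness of $\DS$.

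The one genuinely delicate point is the two-way transfer between $\M$ and the real world. Absoluteness of $\leq_H$ and of ``is the hyperjump of'' is immediate from Proposition \ref{absoluteness}, but ``$\HJ^{Z}$ exists'' is an \emph{existential} second-order statement and so is not automatically upward absolute to $\M$; this is precisely why part (i) must invoke closure of $\beta$-models under hyperarithmetic reducibility—to place the witness $\HJ^{A_{n+3}}$ inside $\M$—rather than relying on absoluteness alone. Once this is handled, the remainder is routine bookkeeping about the arithmetic satisfaction predicate of a coded $\omega$-model.
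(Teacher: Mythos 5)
Your proposal is correct and follows essentially the same route as the paper: the formula $\varphi(M,X)$ you define (all terms of $X$ are columns of $M$ and $\M$ satisfies ``$\HJ^{X_{n+1}}$ exists and is hyperarithmetical in $X_n$'') is the paper's formula, and both directions are handled exactly as in the paper, via closure of $\beta$-models under $\leq_H$ to place the hyperjump witness inside $\M$ for (i) and absoluteness of Boolean combinations of $\Sigma^1_1$ formulas for (ii). Your explicit remark that satisfaction in a coded $\omega$-model is arithmetic in its code, and your flagging of the upward-absoluteness subtlety for ``$\HJ^Z$ exists,'' are points the paper leaves implicit but are entirely consonant with its argument.
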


Basically $\varphi(M, X)$ says that $X$ is a sequence of reals whose elements are in $\M$ and for each $n$, $\M$ believes that $\HJ^{X_{n + 1}}$ exists and is hyperarithmetical in $X_n$. More precisely $\varphi(M, X)$ is the sentence
\[
\forall n\, (X_{n + 1}, X_n \in \M \land \M \vDash ``\exists Y\, [Y = \HJ^{X_{n + 1}} \land Y \leq_H X_n]").
\]

To see why $\varphi(M, X)$ has a solution, recall that $\M$ contains $A_n$ for all $n$ sufficiently large. Let $X$ be the sequence $A$ but with the first few elements removed so that $\M$ contains all elements in $X$. For each $n$, the fact that $A$ is a witness of $\DS$ guarantees that there is some $Y$ such that $\HJ^{X_{n + 1}} = Y$ and $Y \leq_H X_n$. Since $\M$ contains $X_n$ and since $\beta$-models are closed under hyperarithmetic reducibility, $\M$ contains $Y$. And by proposition \ref{absoluteness}, $\beta$-models are sufficiently correct that $\M \vDash ``Y = \HJ^{X_{n + 1}} \land Y \leq_H X_n."$

Suppose $X$ is a sequence such that $\varphi(M, X)$ holds. Then for each $n$ there is a $Y$ such that $\M \vDash ``Y = \HJ^{X_{n + 1}} \land Y \leq_H X_n."$ By proposition $\ref{absoluteness}$, both clauses of the conjunction are absolute between $\beta$-models. Hence $\HJ^{X_{n + 1}}$ exists and is hyperarithmetical in $X_n$. So $X$ is a witness of $\DS$.

\begin{claim}
There is a model of $\ACA_0 + \DS$.
\end{claim}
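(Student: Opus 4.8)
The plan is to combine the two previous claims with the standard facts about $\beta$-models recorded in the ``useful facts'' section. Having fixed the countable coded $\beta$-model $\M$ from Claim 1 and the arithmetic formula $\varphi$ from Claim 2, I would build the desired model of $\ACA_0 + \DS$ by taking a countable coded $\beta$-model containing the code $M$ together with a parameter witnessing $\exists X\,\varphi(M,X)$, and then verifying that this larger model satisfies $\DS$.

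First I would invoke Claim 1, which gives us that $\HJ^M$ exists. By Proposition \ref{hyperjump} (applied now to $M$ in place of $X$), the existence of $\HJ^M$ yields a countable coded $\beta$-model $\N$, coded by some real $N$, with $M \in \N$. This $\N$ will be our candidate model of $\ACA_0 + \DS$. By Proposition \ref{beta}, $\N$ satisfies $\ATR_0$ and hence $\ACA_0$, so it remains only to check that $\N \vDash \DS$.

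Next I would use the fact that $\beta$-models contain witnesses to all true $\Sigma^1_1$ statements with parameters from the model. By Claim 2(i), the statement $\exists X\,\varphi(M,X)$ is true, and since $\varphi$ is arithmetic the formula $\exists X\,\varphi(M,X)$ is $\Sigma^1_1$ with parameter $M \in \N$. Because $\N$ is a $\beta$-model containing $M$, it is correct about this $\Sigma^1_1$ statement and moreover contains a witness $X_0 \in \N$ with $\varphi(M, X_0)$. Here the point of having replaced the $\Pi^1_1$ assertion ``$X$ is a descending sequence'' by the arithmetic formula $\varphi$ is exactly that it makes the existential statement $\Sigma^1_1$, so that a single application of $\beta$-correctness both verifies its truth and produces a witness inside $\N$.

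Finally I would argue that this witness $X_0$ genuinely witnesses $\DS$, and that $\N$ recognizes this. By Claim 2(ii), $\varphi(M, X_0)$ implies that $X_0$ is a witness of $\DS$, i.e.\ for each $n$ the real $\HJ^{(X_0)_{n+1}}$ exists and is $\leq_H (X_0)_n$. Since $X_0 \in \N$ and $\N$ is a $\beta$-model, the relevant clauses --- ``$Y$ is the hyperjump of $(X_0)_{n+1}$'' and ``$Y \leq_H (X_0)_n$'' --- are absolute between $\N$ and the real world by Proposition \ref{absoluteness}, and the requisite hyperjumps and witnesses lie in $\N$ by closure under hyperarithmetic reducibility; hence $\N \vDash \DS$ with witness $X_0$. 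The main obstacle I anticipate is bookkeeping the two levels of $\beta$-correctness: one must keep careful track of the distinction between $\M$'s internal satisfaction of the hyperjump clauses (used to define $\varphi$) and $\N$'s satisfaction of the unrelativized statement $\DS$, ensuring that no appeal exceeds the absoluteness guaranteed by Proposition \ref{absoluteness}. Once $\N \vDash \ACA_0 + \DS$ is established, $\N$ furnishes the desired model, completing the claim and, by the soundness of the construction carried out inside $\ACA_0 + \DS$, yielding $\mathsf{Con}(\ACA_0 + \DS)$.
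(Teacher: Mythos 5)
Your proposal is correct and follows essentially the same route as the paper: obtain $\N$ from the existence of $\HJ^M$ via Proposition \ref{hyperjump}, use $\Sigma^1_1$-correctness of $\N$ to find a witness $X$ to $\varphi(M,\cdot)$ inside $\N$, apply Claim 2(ii) to see $X$ genuinely witnesses $\DS$, and then use Proposition \ref{absoluteness} together with hyperarithmetic closure to conclude $\N \vDash \DS$. No gaps.
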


By proposition \ref{hyperjump}, there is a $\beta$-model $\N$ that contains $M$. Since $\N$ is a $\beta$-model, by proposition \ref{beta}, it is a model of $\ACA_0$.

Since the $\Sigma^1_1$ formula $\exists X\, \varphi(M, X)$ holds and $\N$ is correct for $\Sigma^1_1$ formulas with parameters from $\N$, there is some $X$ in $\N$ such that $\N \vDash \varphi(M, X)$. And since $\N$ is a $\beta$-model, it is correct about this fact---that is, $\varphi(M, X)$ really does hold. Since $\varphi(M, X)$ holds, $X$ is a witness to $\DS$. The point now is just that $\N$ is correct enough to see that $X$ is a witness to $\DS$. In detail: for each $n$, $\HJ^{X_{n + 1}}$ exists and is hyperarithmetical in $X_n$. Since $X_n$ is in $\N$, this means $\HJ^{X_{n + 1}}$ is in $\N$. And by proposition \ref{absoluteness}, $\N$ agrees that it is the hyperjump of $X_{n + 1}$ and that it is hyperarithmetical in $X_n$. Therefore $\N$ agrees that $X$ is a witness to $\DS$.
\end{proof}

\begin{remark}
The previous proof actually demonstrates that $\ACA_0$ proves Theorem \ref{thm-main}. The original Spector proof relies on the theory of admissible ordinals, so it is unlikely to be formalizable in systems weaker than $\ATR_0$.
\end{remark}

\section{Semantic Incompleteness Theorems}
\label{incompleteness}

Steel derives the following theorem as a corollary of his Theorem \ref{Steel}.

\Steelincompleteness*

Because $\omega$-models are correct for arithmetic statements, we can restate this as

\begin{corollary}
Let $T$ be an arithmetically axiomatized $\Lang_2$ theory extending $\ACA_0$. If there is an $\omega$-model of $T$ then there is an $\omega$-model of
\[
T + \textrm{``there is no $\omega$-model of T''}.
\]
\end{corollary}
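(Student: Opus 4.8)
The plan is to read the corollary off directly from Theorem~\ref{Steel2}, with no new recursion theory: I claim that the very $\omega$-model produced there already witnesses the corollary, once the condition ``contains no countable coded $\omega$-model of $T$'' is translated into satisfaction of the sentence ``there is no $\omega$-model of $T$.'' Recall that inside $\Lang_2$ the sentence ``there is an $\omega$-model of $T$'' is formalized as $\exists M\, \psi(M)$, where $\psi(M)$ abbreviates ``$M$ is a countable coded $\omega$-model of $T$.'' So it suffices to take the model $\M$ supplied by Theorem~\ref{Steel2}---an $\omega$-model of $T$ containing no countable coded $\omega$-model of $T$---and verify that $\M \vDash \neg \exists M\, \psi(M)$.

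The one thing requiring care is that $\psi(M)$ is absolute between $\omega$-models and the true universe. First I would observe that, because $T$ is arithmetically axiomatized, the predicate ``$\varphi$ is an axiom of $T$'' is arithmetic, and that satisfaction in a coded $\omega$-model is definable using only number quantifiers: $M \vDash \varphi$ holds just in case there is a finite partial satisfaction function obeying the Tarski clauses---with the second-order quantifiers of $\varphi$ interpreted as number quantifiers over the slice indices of $M$---that assigns ``true'' to $\varphi$. Consequently $\psi(M)$, namely $\forall \varphi\, (\varphi \in T \rightarrow M \vDash \varphi)$ conjoined with the (arithmetic) assertion that $M$ codes an $\omega$-model, unwinds to a statement all of whose quantifiers are number quantifiers over a matrix arithmetic in the parameter $M$. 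This is exactly the class of statements for which $\omega$-models are correct, so for every $M$ belonging to an $\omega$-model $\N$ we have $\N \vDash \psi(M)$ if and only if $\psi(M)$ holds in $V$.

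With this in hand the corollary assembles immediately. Let $\M$ be the model from Theorem~\ref{Steel2}, so $\M \vDash T$ and no element of $\M$ genuinely codes an $\omega$-model of $T$. If $\M \vDash \exists M\, \psi(M)$, then some $M_0 \in \M$ satisfies $\M \vDash \psi(M_0)$; by the absoluteness just noted, $\psi(M_0)$ holds in $V$, i.e.\ $M_0$ really is a countable coded $\omega$-model of $T$ lying in $\M$, contradicting the choice of $\M$. Hence $\M \vDash \neg \exists M\, \psi(M)$, and $\M$ is an $\omega$-model of $T + {}$``there is no $\omega$-model of $T$,'' as required.

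The main obstacle is precisely this absoluteness step, and it is more delicate than the slogan ``$\omega$-models are correct for arithmetic sentences'' suggests. The statement ``$M$ is an $\omega$-model of $T$'' is not a single bounded-arithmetic formula---its arithmetic complexity grows with the axiom being checked---and if one records only that it is $\Pi^1_1$ in $M$, absoluteness is lost, since $\omega$-models may over-believe a true-looking $\Pi^1_1$ statement, which would break the upward direction $\M \vDash \psi(M_0) \Rightarrow \psi(M_0)$ used above. What rescues the argument is that in the unwound form every quantifier is a number quantifier---over formula codes, finite partial satisfaction functions, and slice indices---so no set quantifier is ever evaluated against the possibly deficient second-order universe of $\N$. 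Checking this carefully, and pinpointing that it is exactly the arithmetic axiomatization of $T$ that keeps the axiom-enumeration arithmetic, is the only real content of the proof.
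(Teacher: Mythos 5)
Your overall route is exactly the paper's: the corollary is read off from Theorem \ref{Steel2} by checking that the model produced there satisfies the sentence ``there is no countable coded $\omega$-model of $T$,'' and the paper offers nothing beyond the one-line remark that $\omega$-models are correct for arithmetic statements. Your identification of the one nontrivial point---the absoluteness of ``$M$ is a countable coded $\omega$-model of $T$'' between $\M$ and the true universe---is also right, and you correctly note that only the direction $\M \vDash \psi(M_0) \Rightarrow \psi(M_0)$ is needed.

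However, your justification of that step is wrong as stated. There is no uniform \emph{arithmetic} satisfaction predicate for countable coded $\omega$-models: a \emph{finite} partial satisfaction function cannot obey the Tarski clause for a number quantifier, which requires the truth values of infinitely many instances, and by Tarski undefinability the complexity of ``$M \vDash \varphi$'' must grow with $\varphi$, so it is not captured by any single formula all of whose quantifiers are number quantifiers. The standard formalization instead quantifies over an infinite valuation, i.e.\ a \emph{set} coding truth values of all relevant subformulas with all parameter instantiations; with this, ``$M$ is a countable coded $\omega$-model of $T$'' is $\Sigma^1_1$ in $M$ (here the arithmetic axiomatization of $T$ keeps the axiom-enumeration clause harmless), not arithmetic. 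Your argument survives, but for a slightly different reason: if $\M \vDash \psi(M_0)$, then $\M$ contains a witnessing valuation $f$, and ``$f$ is a valuation for $M_0$ obeying the Tarski clauses and declaring every axiom of $T$ true'' \emph{is} arithmetic in the parameters $(f, M_0)$, hence genuinely true because $\M$ is an $\omega$-model; this is just $\Sigma^1_1$ upward persistence from $\omega$-models. With that substitution for your absoluteness paragraph, the proof is correct and matches the paper's intended argument.
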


Similarly, we can use Theorem \ref{thm-main} to prove a stronger version of a theorem originally proved by Mummert and Simpson in \cite{mummert2004}. Note that in our version we do not need to assume that $T$ is recursively axiomatized, only that it has a $\beta$-model.

\begin{theorem}\label{minimalmodel}
Let $T$ be an $\Lang_2$ theory. If there is a $\beta$-model of $T$ then there is a $\beta$-model of $T$ that contains no countable coded $\beta$-models of $T$.
\end{theorem}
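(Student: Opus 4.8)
The plan is to prove the contrapositive by infinite descent, converting a failure of minimality into a forbidden sequence in the hyperjump hierarchy and invoking Theorem \ref{thm-main}. Suppose for contradiction that \emph{every} $\beta$-model of $T$ contains a countable coded $\beta$-model of $T$. Starting from the given $\beta$-model, I would build an infinite nested sequence of countable coded $\beta$-models $\M_0 \supseteq \M_1 \supseteq \cdots$ of $T$, with codes $M_0, M_1, \dots$, such that $M_{n+1} \in \M_n$ for every $n$. I would then show that this nesting forces $\HJ^{M_{n+2}} \leq_H M_n$ for all $n$, so that the even subsequence $(M_{2k})_{k<\omega}$ violates Theorem \ref{thm-main}.

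For the construction, I would first apply the contradiction hypothesis to the given $\beta$-model of $T$ to obtain a real $M_0$ coding a countable coded $\beta$-model $\M_0 \vDash T$. Since each $\M_n$ produced this way is itself a $\beta$-model of $T$, the hypothesis applies again, yielding $M_{n+1} \in \M_n$ coding a countable coded $\beta$-model $\M_{n+1} \vDash T$; iterating (using dependent choice in the ambient set theory) gives the whole sequence. Because $\M_n \vDash \ACA_0$ (Proposition \ref{beta}) it is closed under recursive joins and columns, so every column of $M_{n+1}$ lies in $\M_n$; hence $\M_{n+1} \subseteq \M_n$ and in particular $M_{n+2} \in \M_n$. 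A pleasant feature here is that the hypothesis is a statement of the ambient theory, so each $\M_n \vDash T$ is handed to us directly in the real world; we never have to transfer \enquote{$\vDash T$} across models, and this is exactly what frees $T$ from any definability or axiomatizability restriction.

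The crux is the inequality $\HJ^{M_{n+2}} \leq_H M_n$, and this is where I expect the real work to lie. I would argue \emph{inside} $\M_n$: it contains both $M_{n+1}$ and $M_{n+2}$, and by Proposition \ref{absoluteness}(3) it correctly recognizes that $M_{n+1}$ codes a countable coded $\beta$-model, which (being true in reality) contains $M_{n+2}$. Thus $\M_n$ satisfies \enquote{there is a countable coded $\beta$-model containing $M_{n+2}$}, so by the $\ACA_0$-provable Proposition \ref{hyperjump} it satisfies \enquote{$\HJ^{M_{n+2}}$ exists}. The witness $Y \in \M_n$ with $\M_n \vDash$ \enquote{$Y = \HJ^{M_{n+2}}$} is, by the absoluteness in Proposition \ref{absoluteness}(1), genuinely the hyperjump of $M_{n+2}$; hence $\HJ^{M_{n+2}} \in \M_n$, and since $\M_n$ is countable coded with code $M_n$, every one of its elements is recursive in $M_n$. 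Therefore $\HJ^{M_{n+2}} \leq_T M_n$, a fortiori $\leq_H M_n$. The main obstacle is thus packaging the three absoluteness facts so that the hyperjump, asserted to exist only inside $\M_n$, is pulled back to a genuine hyperjump living \emph{in} $\M_n$; note it lands in $\M_n$ rather than $\M_{n+1}$, which is why the descent proceeds two steps at a time and the even subsequence is the object one finally feeds to Theorem \ref{thm-main}.
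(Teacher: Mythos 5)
Your proposal is correct and follows essentially the same route as the paper: assume every $\beta$-model of $T$ contains a countable coded $\beta$-model of $T$, generate a nested sequence of such models, and extract a descending hyperjump sequence contradicting Theorem \ref{thm-main}. The only difference is in the key reducibility step: the paper observes directly that $\HJ^{N_{n+1}}$ is arithmetic in $N_n$ because the $\beta$-model $\N_n$ is correct about all $\Pi^1_1$ facts about its element $N_{n+1}$, whereas you realize $\HJ^{M_{n+2}}$ as an actual \emph{element} of $\M_n$ (hence recursive in $M_n$) by running Proposition \ref{hyperjump} inside $\M_n$ --- a correct but slightly more roundabout argument that costs you an extra level of nesting and the passage to the even subsequence.
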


\begin{proof}
Suppose not. Then every $\beta$-model of $T$ contains a countable coded $\beta$-model of $T$. Let $\M$ be a $\beta$-model of $T$. So $\M$ contains some countable coded $\beta$-model $\N_0$ coded by a real $N_0$. Similarly $\N_0$ contains a countable coded $\beta$-model of $T$, $\N_1$, coded by a real $N_1$. In this manner we can define a sequence of countable $\beta$-models of $T$, $\N_0, \N_1, \N_2, \ldots$ along with their codes $N_0, N_1, N_2, \ldots$

But for each $n$, $N_{n + 1} \in \N_n$ and since $\N_n$ is a $\beta$-model it is correct about all $\Pi^1_1$ facts about $N_{n + 1}$. In other words, $\HJ^{N_{n + 1}}$ is arithmetic in $N_n$. So $N_0, N_1, \ldots$ provides an example of the type of descending sequence in the hyperdegrees shown not to exist in theorem \ref{thm-main}.
\end{proof}

In fact, this same proof actually yields a seemingly stronger result. A $\beta_n$-model is defined to be an $\omega$-model of second order arithmetic which is correct for all $\Sigma^1_n$ statements with parameters from the model. The same proof as above proves the theorem mentioned in the introduction (where once again our new proof shows that the assumption that $T$ is recursively axiomatized can be dropped):

\ms*

Since the statement that a real is the code for a $\beta_n$-model is $\Pi^1_n$, $\beta_n$-models are correct about such statements. And if $T$ is a $\Sigma^1_n$ axiomatized theory then $\beta_n$-models are also correct about which formulas are in $T$. Thus for $\Sigma^1_n$ axiomatized theories we can restate the above theorem to get the following theorem of Mummert and Simpson:

\begin{theorem}\label{ms2}
Let $T$ be a $\Sigma^1_n$ axiomatized $\Lang_2$ theory. If there is a $\beta_n$-model of $T$, then there is a $\beta_n$-model of $$\textrm{$T$+``there is no countable coded $\beta_n$-model of $T$.''}$$
\end{theorem}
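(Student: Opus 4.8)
The plan is to deduce this from Theorem~\ref{ms} by showing that, when $T$ is $\Sigma^1_n$ axiomatized, a $\beta_n$-model containing no countable coded $\beta_n$-model of $T$ actually \emph{satisfies} the assertion that no such model exists. Theorem~\ref{ms} hands us a $\beta_n$-model $\M$ of $T$ that contains no countable coded $\beta_n$-model of $T$; the entire task is to upgrade this external fact about $\M$ into an internal one, i.e.\ to pass from ``$\M$ contains no such model'' to ``$\M \vDash$ there is no countable coded $\beta_n$-model of $T$.''

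First I would pin down the complexity of the predicate $P(N) :=$ ``$N$ codes a countable $\beta_n$-model of $T$.'' The clause ``$N$ codes a $\beta_n$-model'' is $\Pi^1_n$, as already noted in the excerpt. For the clause ``the coded model is a model of $T$,'' I would write it as $\forall \psi\,(\psi \text{ is an axiom of } T \to \mathrm{Sat}_N(\psi))$, where $\mathrm{Sat}_N$ is the satisfaction predicate of the coded $\omega$-model. This predicate is uniformly $\Delta^1_1$ in $N$: the satisfaction relation of a countable coded $\omega$-model exists and is unique, and the statement that a set is a correct Tarskian satisfaction relation for the model coded by $N$ is arithmetic in $N$, so $\mathrm{Sat}_N(\psi)$ can be written in both $\Sigma^1_1$ and $\Pi^1_1$ form. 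Since the axiom set of $T$ is $\Sigma^1_n$, the matrix $\psi \in T \to \mathrm{Sat}_N(\psi)$ is $\Pi^1_n$ (using $n \geq 1$, so that $\Delta^1_1 \subseteq \Pi^1_n$), and the leading number quantifier $\forall \psi$ preserves this. Hence $P$ is a conjunction of two $\Pi^1_n$ predicates, so $P$ is $\Pi^1_n$.

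Next, since $\M$ is a $\beta_n$-model it is correct for $\Sigma^1_n$, hence also for $\Pi^1_n$, statements with parameters from $\M$. Applying this to $P$, for every real $N \in \M$ we have $\M \vDash P(N)$ if and only if $P(N)$ genuinely holds. By Theorem~\ref{ms}, no $N \in \M$ genuinely satisfies $P$, so $\M \vDash \neg P(N)$ for every $N \in \M$. Because $\M$ is an $\omega$-model, this yields $\M \vDash \forall N\,\neg P(N)$, i.e.\ $\M \vDash$ ``there is no countable coded $\beta_n$-model of $T$.'' Combined with $\M \vDash T$, this $\M$ witnesses the theorem.

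The main obstacle I anticipate is the complexity bookkeeping in the second paragraph: verifying that the satisfaction predicate of a coded $\omega$-model is uniformly $\Delta^1_1$ in its code, and confirming that the $\Sigma^1_n$ axiomatization forces $P$ into $\Pi^1_n$ rather than $\Sigma^1_{n+1}$. This is precisely the step where the hypothesis on $T$ is used, and it is what allows the assumption of recursive axiomatization to be dropped in favor of $\Sigma^1_n$ axiomatization. Once $P$ is confirmed to be $\Pi^1_n$, the correctness of $\beta_n$-models does the rest, and the transfer from ``contains no model'' to ``satisfies that there is no model'' is immediate because $\M$ is an $\omega$-model.
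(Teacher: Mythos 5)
Your proposal is correct and follows essentially the same route as the paper: apply Theorem~\ref{ms} to get a $\beta_n$-model of $T$ containing no countable coded $\beta_n$-model of $T$, observe that ``$N$ is a countable coded $\beta_n$-model of $T$'' is $\Pi^1_n$ when $T$ is $\Sigma^1_n$ axiomatized, and use the $\Sigma^1_n$/$\Pi^1_n$-correctness of $\M$ to internalize the nonexistence statement. Your complexity bookkeeping (the $\Delta^1_1$ satisfaction predicate, the $\Pi^1_n$ implication $\psi\in T\to\mathrm{Sat}_N(\psi)$) just spells out what the paper compresses into the remark preceding the theorem.
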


\begin{proof}
Let $\M$ be a $\beta_n$-model of $T$ which contains no countable coded $\beta_n$-models of $T$. For any $N \in \M$, the statement that $N$ is not a countable coded $\beta_n$-model of $T$ is a true $\Sigma^1_n$ sentence and thus is satisfied by $\M$. Therefore $\M$ satisfies the statement ``there is no countable coded $\beta_n$-model of $T$.''
\end{proof}

From this we immediately infer the following corollary, a strengthened version of Mummert and Simpson's Corollary 2.4 from \cite{mummert2004}:

\begin{corollary}\label{ms3}
Let $T$ be a $\Sigma^1_n$ axiomatized $\Lang_2$ theory. If $T$ has a $\beta_n$-model then $T$ has a $\beta_n$ model that is not a $\beta_{n+1}$ model.
\end{corollary}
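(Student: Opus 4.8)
The plan is to apply Theorem \ref{ms2} and then argue that the model it produces cannot be $\beta_{n+1}$-correct, because it is wrong about one specific true $\Sigma^1_{n+1}$ sentence. Let $\sigma$ abbreviate the sentence ``there is a countable coded $\beta_n$-model of $T$.'' The first thing I would record is the complexity of $\sigma$: as noted just before Theorem \ref{ms2}, for a $\Sigma^1_n$ axiomatized $T$ the predicate ``$N$ codes a countable coded $\beta_n$-model of $T$'' is $\Pi^1_n$, so $\sigma$, being an existential second-order quantifier over this predicate, is $\Sigma^1_{n+1}$. In particular, every $\beta_{n+1}$-model is correct about $\sigma$.

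Next I would invoke Theorem \ref{ms2} to obtain a $\beta_n$-model $\M$ of $T + \neg\sigma$; such a model exists precisely because $T$ has a $\beta_n$-model. The key remaining point is that $\sigma$ is in fact \emph{true}. Granting this, $\M$ is a $\beta_n$-model that disagrees with reality about the $\Sigma^1_{n+1}$ sentence $\sigma$, since it satisfies $\neg\sigma$ while $\sigma$ holds. Hence $\M$ fails to be correct for $\Sigma^1_{n+1}$ sentences and is therefore not a $\beta_{n+1}$-model. Since $\M \vDash T$, this $\M$ is the desired $\beta_n$-model of $T$ that is not a $\beta_{n+1}$-model.

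The main obstacle is thus verifying that $\sigma$ is true, that is, that the hypothesis ``$T$ has a $\beta_n$-model'' already guarantees a \emph{countable coded} $\beta_n$-model of $T$. I would establish this by a standard Löwenheim--Skolem/reflection argument: starting from an arbitrary $\beta_n$-model of $T$, pass to a countable elementary submodel of a sufficiently large initial segment of the set-theoretic universe containing it, and check that the reals appearing in this submodel constitute a countable coded $\beta_n$-model of $T$. Correctness for $\Sigma^1_n$ sentences is preserved because the submodel is elementary in a structure that computes $\Sigma^1_n$ truth correctly, and satisfaction of $T$ transfers by elementarity. Once this reflection step is in hand, the complexity computation and the appeal to Theorem \ref{ms2} combine to give the result immediately.
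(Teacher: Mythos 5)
Your proof is correct and takes essentially the same route as the paper: apply Theorem \ref{ms2} to get a $\beta_n$-model of $T$ plus the denial of the true $\Sigma^1_{n+1}$ sentence ``there is a countable coded $\beta_n$-model of $T$,'' and conclude that this model cannot be $\Sigma^1_{n+1}$-correct. The only difference is that you make explicit, via a L\"owenheim--Skolem argument, why that sentence is actually true given only that $T$ has a (possibly uncountable) $\beta_n$-model --- a point the paper's proof leaves implicit.
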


\begin{proof}
Let $T$ be a $\Sigma^1_n$ axiomatized $\Lang_2$ theory with a $\beta_n$ model. By Theorem \ref{ms2}, there is a $\beta_n$ model $\M$ of
$\textrm{$T$+``there is no countable coded $\beta_n$-model of $T$.''}$ The latter is a false $\Pi^1_{n+1}$ sentence, whence $\M$ is not a $\beta_{n+1}$ model.
\end{proof}

\begin{remark}
The definability restriction on $T$ in Theorem \ref{ms2} and Corollary \ref{ms3} can be relaxed slightly to the requirement that $T$ have an axiomatization definable by a formula consisting of a series of first order quantifiers over a Boolean combination of $\Sigma^1_n$-formulas. The proof is essentially the same.
\end{remark}

\subsection{Further discussion of definability restrictions}
When this paper was first published, Theorem \ref{ms2} and Corollary \ref{ms3} were missing the definability restriction on $T$. We will now address whether such a restriction is necessary. Before doing so, it will be helpful to state a result implicit in the proof of Theorem \ref{minimalmodel}, since we will use it several times below.

\begin{theorem}\label{minimalmodel2}
The relation ``contains a countable coded model isomorphic to'' is a well-order on $\beta$-models.
\end{theorem}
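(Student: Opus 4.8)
The plan is to read the relation, which I will write $\M \prec \N$, as ``$\N$ contains a countable coded $\beta$-model isomorphic to $\M$,'' and to verify that $\prec$ is a transitive, well-founded, and total strict order. The first useful reduction is that for $\omega$-models isomorphism collapses to having the same second-order part: any isomorphism of $\Lang_2$-structures fixes the standard first-order part pointwise and hence fixes each real, so two $\omega$-models are isomorphic exactly when they contain the same reals. Thus $\M \prec \N$ says precisely that some real of $\N$ codes a $\beta$-model whose reals are exactly those of $\M$. Since the columns of a code are uniformly recursive in it and $\beta$-models are closed under $\leq_T$ (being models of $\ACA_0$ by Proposition \ref{beta}), this reformulation makes the order-theoretic bookkeeping routine.

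Well-foundedness is the heart of the matter and is exactly what is implicit in the proof of Theorem \ref{minimalmodel}. Suppose toward a contradiction that $\M_0 \succ \M_1 \succ \M_2 \succ \cdots$, and for each $n \geq 1$ let $N_n$ be the real coding the copy of $\M_n$ sitting inside $\M_{n-1}$; then $N_{n+1} \in \M_n$ for all $n$. For $n \geq 1$ the model $\M_n$ has the same reals as the model coded by $N_n$, namely the columns of $N_n$, so $N_{n+1} \leq_T N_n$. Since $\M_n$ is a $\beta$-model containing $N_{n+1}$, it is $\Pi^1_1$-correct about $N_{n+1}$ and hence contains $\HJ^{N_{n+1}}$, which therefore is a column of $N_n$; thus $\HJ^{N_{n+1}} \leq_T N_n$, and in particular $\HJ^{N_{n+1}} \leq_H N_n$. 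So $(N_n)_{n \geq 1}$ is a descending sequence in the hyperjump hierarchy, contradicting Theorem \ref{thm-main}. This simultaneously yields asymmetry and irreflexivity. Transitivity is then a short chase: if a real $N'$ codes a copy of $\M$ inside $\N$ and a real $P$ codes a copy of $\P$ inside $\M$, then $P$ is a real of the copy coded by $N'$ (same reals as $\M$), so $P \leq_T N' \in \N$ and hence $P \in \N$, giving $\P \prec \N$.

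The step I expect to be the main obstacle, and the only one not already present in the proof of Theorem \ref{minimalmodel}, is totality: that any two non-isomorphic $\beta$-models are $\prec$-comparable. Here one must genuinely exploit the $\Sigma^1_1$-correctness of $\beta$-models rather than mere well-foundedness, since the analogous relation on reals ($\HJ^A \leq_H B$) is only a well-founded \emph{partial} order. The natural approach is to fix codes $M,N$ for the two models, pass to a common $\beta$-model $\mathcal{P}$ containing $M \oplus N$ (which exists by Proposition \ref{hyperjump}, as $\HJ^{M \oplus N}$ exists), and carry out the comparison inside $\mathcal{P}$, where both are countable coded $\beta$-models and the machinery of $\ATR_0$ is available to analyze their second-order parts. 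One would then try to show that the reals of one are included in, and a code for it is computed within, the other. Making this comparison go through is the delicate point: it is precisely where the linear structure of the hierarchy—and thus the upgrade from a well-founded order to a genuine well-order—must be extracted from the correctness properties of $\beta$-models.
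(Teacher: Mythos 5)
Your reduction of isomorphism to equality of second-order parts, your transitivity argument, and the overall shape of your well-foundedness argument all match what the paper has in mind: the theorem is stated with no proof beyond the remark that it is implicit in the proof of Theorem \ref{minimalmodel}, and that proof establishes exactly well-foundedness by turning a descending chain of $\beta$-models into a sequence of codes that violates Theorem \ref{thm-main}. One step of your well-foundedness argument is wrong as written, however: from ``$\M_n$ is $\Pi^1_1$-correct about $N_{n+1}$'' you conclude that $\M_n$ \emph{contains} $\HJ^{N_{n+1}}$. $\beta$-models are not closed under hyperjump: they satisfy $\ATR_0$ but not in general $\Pi^1_1$ comprehension, and by the Gandy basis theorem there are $\beta$-models omitting $\HJ^{\0}$. (Indeed, if every $\beta$-model contained the hyperjumps of its elements, then by Propositions \ref{hyperjump} and \ref{absoluteness} every $\beta$-model would contain a countable coded $\beta$-model, contradicting the very well-foundedness you are proving.) The paper's route avoids this: for each $e$, membership of $e$ in $\HJ^{N_{n+1}}$ is a $\Pi^1_1$ fact about a column of $N_n$, so it holds iff $\M_n$ satisfies the corresponding formula, and satisfaction of a fixed formula in a coded $\omega$-model is arithmetic in the code; hence $\HJ^{N_{n+1}}$ is arithmetic in $N_n$ and in particular $\HJ^{N_{n+1}} \leq_H N_n$, which is all that Theorem \ref{thm-main} requires. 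Your conclusion survives, but only after this repair.

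The totality step that you flagged as delicate and left open is in fact not provable: the relation genuinely branches, just as the relation $\HJ^A \leq_H B$ on reals is only a well-founded \emph{partial} order. For instance, if $x$ and $y$ are mutually Cohen generic over $L$, so that $x \notin L[y]$ and $y \notin L[x]$, then $\mathbb{R} \cap L[x]$ and $\mathbb{R} \cap L[y]$ are $\beta$-models (by Shoenfield absoluteness, and they can be arranged to be countable by a further collapse); since containing a countable coded copy forces containment of second-order parts, and neither of these contains the other's distinguished generic, they are incomparable. (Distinct uncountable $\beta$-models, such as $\mathbb{R}$ itself and a proper uncountable hull, are incomparable for the trivial reason that neither admits a countable code.) So the word ``well-order'' in the statement should be read as ``well-founded strict partial order''; that is what the proof of Theorem \ref{minimalmodel} actually yields, and it is all that the later applications use, since they only require minimal $\beta$-models with certain properties. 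You should not spend effort trying to extract linearity from $\Sigma^1_1$-correctness.
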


If the definability restriction is removed completely from Theorem \ref{ms2}, it is not even clear what the statement means. The most reasonable interpretation is to assume that $T$ is definable in second order arithmetic (but with no bound on the complexity of the definition) and to use the formula defining $T$ to write ``there is no countable coded $\beta_n$-model of $T$'' as an $\Lang_2$ sentence. Unfortunately, this interpretation is false, as the following proposition demonstrates.

\begin{proposition}
There is an $\Lang_2$ theory $T$ and a $\Sigma^1_2$ formula $\psi$ which defines $T$ (relative to some fixed, standard coding of $\Lang_2$ sentences) such that $T$ has a $\beta$-model but every $\beta$-model of $T$ satisfies
\[
\text{``there is a countable coded $\beta$-model of the theory defined by $\psi$.''}
\]
\end{proposition}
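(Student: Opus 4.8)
The plan is to build $\psi$ by self-reference so that the externally defined theory $T$ literally contains, as one of its axioms, the very sentence whose satisfaction the conclusion demands; all the real work then goes into showing that $T$ has a $\beta$-model at all. The guiding observation is an absoluteness \emph{asymmetry}. Since a $\beta$-model is correct for $\Sigma^1_1$ (hence $\Pi^1_1$) formulas, a $\Sigma^1_2$ formula $\psi=\exists Y\,\Pi^1_1$ can only be \emph{understated} by a $\beta$-model $\M$: if $\M\vDash\psi(\sigma)$ then the witness $Y$ lies in $\M$, so $\Pi^1_1(Y)$ is correctly verified and $\psi(\sigma)$ holds in $V$. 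Consequently the theory $T^{\M}:=\{\sigma : \M\vDash\psi(\sigma)\}$ that $\M$ computes from $\psi$ is always a subset of the genuine theory $T:=\{\sigma:\psi(\sigma)\}$, and can be strictly weaker. This is exactly the gap that allows a $\beta$-model of $T$ to contain a coded $\beta$-model of $T^{\M}$ while, consistently with Theorem \ref{minimalmodel}, containing no coded $\beta$-model of the true $T$.

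Concretely, I would let $\zeta$ be the true $\Sigma^1_2$ sentence ``there is a countable coded $\beta$-model $N$ with $N\vDash$ `there is a countable coded $\beta$-model'\,'' (true because coded $\beta$-models containing coded $\beta$-models exist by iterating Proposition \ref{hyperjump}); let $\eta$ be the sentence ``there is a countable coded $\beta$-model of the theory defined by $\psi$,'' which is precisely the sentence $\theta$ appearing in the conclusion; and use the diagonal lemma to produce a formula $\psi$ with
\[
\psi(\sigma)\ \leftrightarrow\ \big(\sigma\in\ACA_0\big)\ \vee\ \big(\sigma=\ulcorner\eta\urcorner\wedge\zeta\big).
\]
As $\zeta$ is $\Sigma^1_2$ and the other clauses are arithmetic, $\psi$ is $\Sigma^1_2$, as required. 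Because $\zeta$ is true, $\psi(\ulcorner\eta\urcorner)$ holds and hence $T=\ACA_0+\eta$; in particular $\eta=\theta$ is an axiom of $T$, so the conclusion that every $\beta$-model of $T$ satisfies $\theta$ is automatic.

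The only substantive point is that $T$ has a $\beta$-model, and here the $\Sigma^1_2$ trick is essential: were $\psi$ arithmetic, $\beta$-models would compute $T^{\M}=T=\ACA_0+\theta$ correctly, and a $\beta$-model of it would begin an infinite descent of coded $\beta$-models, contradicting the well-order of Theorem \ref{minimalmodel2}. Using that well-order, I would take $\M_1$, the least non-minimal $\beta$-model: by linearity the unique minimal $\beta$-model $\M_0$ is its only predecessor, so every coded $\beta$-model inside $\M_1$ is isomorphic to $\M_0$ and is therefore itself minimal. Hence $\M_1\not\vDash\zeta$, so $\M_1$ computes $T^{\M_1}=\ACA_0$; and $\M_1$ reads $\eta$ as ``there is a coded $\beta$-model of $\ACA_0$,'' which holds because $\M_0\in\M_1$ is such a model by Proposition \ref{beta}. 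Thus $\M_1\vDash\eta$, and so $\M_1\vDash T$.

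The main obstacle is exactly this last step: arranging that $T$ is satisfiable even though its key axiom $\eta$, read correctly, forbids $\beta$-models. The resolution is to choose $\zeta$ delicately enough that some non-minimal $\beta$-model fails it, collapsing $T^{\M}$ to $\ACA_0$ and turning $\eta$ into the harmless assertion that \emph{some} coded $\beta$-model exists, while keeping $\zeta$ true so that $\eta$ genuinely is an axiom of $T$. Verifying that $\M_1$ has no coded $\beta$-model beyond (copies of) $\M_0$ — so that it falls into the collapsed case — uses the linearity of the order in Theorem \ref{minimalmodel2}; granted that, checking both requirements is routine.
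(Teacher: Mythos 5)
Your proof is correct, and the engine driving it is the same as the paper's: a true $\Sigma^1_2$ sentence asserting ``there is a countable coded $\beta$-model which itself contains a countable coded $\beta$-model'' (your $\zeta$, the paper's $\varphi_2$) is used as a switch inside the $\Sigma^1_2$ definition $\psi$, and the witnessing model is in both cases a $\beta$-model at the second level of the order of Theorem \ref{minimalmodel2} --- one containing a coded $\beta$-model but no coded $\beta$-model that itself contains one --- so that the switch turns off and the theory defined by $\psi$ shrinks inside the model. The difference is in the packaging. The paper takes $T=\{\varphi_1\land\lnot\varphi_2\}$ and a non-self-referential $\psi$ for which the internally computed theory collapses to the \emph{empty} theory, so the target sentence holds in the model for the trivial reason that any coded $\beta$-model (supplied by $\varphi_1$) models the empty theory. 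You instead invoke the diagonal lemma so that the target sentence $\eta$ is literally an axiom of $T$, making the conclusion automatic, and arrange for the internal theory to collapse to $\ACA_0$ instead. Both work; yours buys the satisfying feature that the conclusion holds ``for the right reason,'' at the cost of a fixed-point construction --- a nontrivial cost here, since the paper advertises in \textsection{\ref{well_foundedness}} that its arguments avoid self-reference, and the existence of a diagonalization-free witness is part of that point. One small repair: you do not need (and should not lean on) linearity of the relation in Theorem \ref{minimalmodel2} --- as stated it is not literally linear on all $\beta$-models, since two uncountable ones are incomparable. Well-foundedness alone gives a $\beta$-model $\M_1$ minimal among those containing a coded $\beta$-model; every coded $\beta$-model inside it is then automatically minimal, and since every $\beta$-model satisfies $\ACA_0$ by Proposition \ref{beta}, you never need the internal witness to be isomorphic to a unique $\M_0$.
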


\begin{proof}
Let $\varphi_1$ be the sentence ``there is a countable coded $\beta$-model'' and let $\varphi_2$ be the sentence ``there is a countable coded $\beta$-model which contains a countable coded $\beta$-model.'' Note that $\varphi_2$ is a true $\Sigma^1_2$-sentence. Let $T$ be the theory $\{\varphi_1\land\lnot\varphi_2\}$. By Theorem \ref{minimalmodel2}, there is a $\beta$-model of $T$.

Now assume that $\langle\theta_n\rangle_{n \in \mathbb{N}}$ is a fixed, standard enumeration of all $\Lang_2$-sentences. Let $\psi(n)$ be the formula
\[
\varphi_2\land (\text{$\theta_n =$ ``$\varphi_1\land\lnot\varphi_2$''}).
\]
In other words, if $\varphi_2$ holds then $\psi$ defines $T$ and if $\varphi_2$ does not hold then $\psi$ defines the empty theory. Since $\varphi_2$ is true, $\psi$ is a $\Sigma^1_2$ definition of $T$.

Now let $\M$ be any $\beta$-model of $T$. Since $\M \models \varphi_1$, $\M$ believes there is a countable coded $\beta$-model. And since $\M \models \lnot\varphi_2$, $\M$ believes that $\psi$ defines the empty theory and thus that the theory defined by $\psi$ is satisfied in every model. Thus $\M$ believes there is a countable coded $\beta$-model of the theory defined by $\psi$.
\end{proof}

Thus the definability restriction on Theorem \ref{ms2} cannot even be relaxed to include all $\Sigma^1_{n + 1}$ axiomatized theories. The situation for Corollary \ref{ms3} is less clear. In particular, the authors do not currently know if the result holds when the definability restriction is removed, and consider this to be an interesting question. The following two results demonstrate why finding a counterexample may not be easy.

\begin{proposition}\label{separation}
Let $T$ be an $\Lang_2$ theory. If $T$ has a $\beta_n$-model that contains $T$ then $T$ has a $\beta_n$-model that is not a $\beta_{n + 1}$-model.
\end{proposition}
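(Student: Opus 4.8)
The plan is to adapt the argument behind Corollary \ref{ms3} (which goes through Theorem \ref{ms2} and the minimal-model construction in the proof of Theorem \ref{minimalmodel}), using the given real $T$ as a parameter in place of the missing definability hypothesis. Fix $n \geq 1$ and let $\mu(K,T)$ abbreviate ``$K$ codes a countable coded $\beta_n$-model $\mathcal{K}$ with $T \in \mathcal{K}$ and $\mathcal{K} \models T$,'' where ``$\mathcal{K} \models T$'' means $\forall \sigma\,(\sigma \in T \to \mathcal{K} \models \sigma)$, with the real $T$ furnishing the set of (codes of) axioms. The target false sentence will be $\varphi := \forall K\, \neg\mu(K,T)$, i.e.\ ``there is no countable coded $\beta_n$-model of $T$ containing $T$.'' To fix complexities: ``$K$ codes a $\beta_n$-model'' is $\Pi^1_n$; ``$T \in \mathcal{K}$'' is arithmetic; and the coded-satisfaction relation ``$\mathcal{K} \models \sigma$'' is $\Delta^1_1$ uniformly in $K$ (it is pinned down by the Tarski conditions, and the set quantifiers of $\sigma$ become number quantifiers over the column indices of $K$), so ``$\mathcal{K} \models T$'' is $\Pi^1_1$ in $K \oplus T$ and in particular absolute between $\beta$-models. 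Hence $\mu(K,T)$ is $\Pi^1_n$ with parameter $T$ and absolute between $\beta_n$-models, while $\varphi$ is $\Pi^1_{n+1}$.

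First I would show $\varphi$ is \emph{false}. Let $\M$ be a $\beta_n$-model of $T$ with $T \in \M$, and by downward L\"owenheim--Skolem take a countable elementary substructure $\M_0 \preceq \M$ with $T \in \M_0$. Since every standard numeral is named, $\M_0$ is again an $\omega$-model; elementarity gives $\M_0 \models T$; and for every $\Sigma^1_n$ sentence $\psi$ with parameters in $\M_0$ we have $\M_0 \models \psi \Leftrightarrow \M \models \psi \Leftrightarrow \psi$ is true (the last step because $\M$ is $\beta_n$), so $\M_0$ is a $\beta_n$-model. Thus $\M_0$ is a countable coded $\beta_n$-model of $T$ containing $T$, so $\mu(M_0,T)$ holds and $\varphi$ is false.

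Next I would produce a $\beta_n$-model of $T$ \emph{witnessing} $\varphi$, by descent. Starting from $\M$, repeatedly pass to a countable coded $\beta_n$-model of $T$ containing $T$ that is an \emph{element} of the current model, as long as one exists; each such model is again a $\beta_n$-model of $T$ containing $T$, and is a $\beta$-model since $n \geq 1$. If this never halted we would obtain codes $K_1, K_2, \dots$ with each $K_{j+1}$ an element of the $\beta$-model coded by $K_j$, whence $\HJ^{K_{j+1}} \leq_H K_j$, contradicting Theorem \ref{thm-main} (this is precisely the descent in the proof of Theorem \ref{minimalmodel}; equivalently, one invokes the well-foundedness asserted by Theorem \ref{minimalmodel2}). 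So the process halts at a $\beta_n$-model $\N$ of $T$ with $T \in \N$ that contains no countable coded $\beta_n$-model of $T$ containing $T$. Because $\mu(\cdot,T)$ is absolute between $\beta_n$-models, the statement ``for every $K \in \N$, $\neg\mu(K,T)$'' is exactly $\N \models \varphi$. Since $\varphi$ is a false $\Pi^1_{n+1}$ sentence with parameter $T \in \N$, no $\beta_{n+1}$-model can satisfy it; hence $\N$ is a $\beta_n$-model of $T$ that is not a $\beta_{n+1}$-model, as required.

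The hard part will be the complexity and absoluteness bookkeeping for ``$\mathcal{K} \models T$'' with $T$ as a parameter: one must verify that coded satisfaction is $\Delta^1_1$ (hence absolute between $\beta$-models) so that, simultaneously, $\varphi$ remains at the $\Pi^1_{n+1}$ level that forces failure of $\beta_{n+1}$-ness and that ``$\N$ contains no such submodel'' transfers to ``$\N \models \varphi$.'' The L\"owenheim--Skolem step requires only the routine verification that the countable substructure stays $\omega$-standard and $\beta_n$.
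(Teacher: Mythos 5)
Your proof is correct and follows essentially the same route as the paper's: take a $\beta_n$-model of $T$ containing $T$ that is minimal with respect to containment of countable coded $\beta_n$-models of $T$ containing $T$ (via the well-foundedness underlying Theorem \ref{minimalmodel2}), and observe that it falsifies the true $\Sigma^1_{n+1}$ sentence asserting the existence of such a coded model, hence cannot be a $\beta_{n+1}$-model. Your L\"owenheim--Skolem step justifying that this sentence really is true, and your explicit complexity and absoluteness bookkeeping for the coded satisfaction relation, are details the paper's short proof leaves implicit, but they do not change the argument.
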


\begin{proof}
By Theorem \ref{minimalmodel2}, we can find a $\beta_n$-model $\M$ of $T$ which contains $T$ but which contains no countable coded $\beta_n$-model of $T$ containing $T$. Hence the statement
\[
\text{``there is a countable coded $\beta_n$-model of $T$''}
\]
does not hold in $\M$. Since it is a true $\Sigma^1_{n + 1}$ statement with parameters from $\M$ (since $\M$ contains $T$), $\M$ is not a $\beta_{n + 1}$-model.
\end{proof}

\begin{corollary}
Let $T$ be a true $\Lang_2$ theory. Then for all $n \ge 1$, $T$ has a $\beta_n$-model that is not a $\beta_{n + 1}$-model.
\end{corollary}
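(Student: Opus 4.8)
The plan is to verify the hypothesis of Proposition \ref{separation} and then invoke that proposition directly. Proposition \ref{separation} tells us that if $T$ has a $\beta_n$-model that contains (the real coding) $T$, then $T$ has a $\beta_n$-model that is not a $\beta_{n+1}$-model. So it suffices to produce, for each $n \ge 1$, a single $\beta_n$-model of $T$ that contains $T$ as an element. Since the conclusion we want is precisely the conclusion of Proposition \ref{separation}, all of the real work has already been done; the corollary only needs the observation that a \emph{true} theory has a highly correct model containing its own code.

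For this I would use the full intended model of second order arithmetic, $\mathbb{S} = (\omega, \mathcal{P}(\omega))$. Because $T$ is true, every axiom of $T$ holds in $\mathbb{S}$, so $\mathbb{S} \vDash T$. Since satisfaction in $\mathbb{S}$ coincides with truth, $\mathbb{S}$ is correct for \emph{all} $\Lang_2$ formulas with parameters from $\mathbb{S}$; in particular it is correct for $\Sigma^1_n$ formulas and hence is a $\beta_n$-model for every $n$. Finally, under the fixed coding of $\Lang_2$-sentences the theory $T$ is a set of natural numbers, i.e.\ a real, and $\mathbb{S}$ contains every real, so it contains the code of $T$. Thus $\mathbb{S}$ witnesses the hypothesis of Proposition \ref{separation}, and the corollary follows for each $n \ge 1$.

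The one point worth flagging is why we take the full model rather than, say, a countable coded $\beta_n$-model containing the code of $T$. A countable coded $\beta_n$-model is only guaranteed to agree with truth on $\Sigma^1_n$ statements, whereas the axioms of $T$ may have arbitrarily high analytical complexity, so such a model need not satisfy $T$ at all. The full standard model sidesteps this precisely because it is correct for every formula. Consequently I do not anticipate a genuine obstacle: the substance of the corollary is carried entirely by Proposition \ref{separation}, and the present argument reduces to the essentially immediate fact that $\mathbb{S}$ is a $\beta_n$-model of any true $T$ which contains $T$'s own code.
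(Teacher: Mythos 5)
Your proof is correct and follows the same route as the paper: verify the hypothesis of Proposition \ref{separation} and apply it. The paper simply asserts that a true theory has a $\beta_n$-model containing $T$; you make the witness explicit (the full model $(\omega,\mathcal{P}(\omega))$), and your remark about why a countable coded $\beta_n$-model would not obviously suffice is a sound observation, not a divergence.
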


\begin{proof}
If $T$ is a true theory, then for every $n$ there is a $\beta_n$-model containing $T$, so we may apply Proposition \ref{separation}.
\end{proof}

\section{Spector Ranks}
\label{ranks}

Define a relation $\prec$ on pairs of reals by $A \prec B$ iff $\HJ^A \leq_H B$. By theorem \ref{thm-main}, this relation is well-founded and therefore reals can be assigned ordinal ranks according to it. Let's refer to the $\prec$-rank of a real as its \emph{Spector rank}. In this section we will calculate the Spector ranks of reals, showing that we get the same ranks as those induced by the $\omega_1$'s of reals.

We will need to use the following theorems:

\begin{theorem}[Spector]
\label{spector}
For any reals $A$ and $B$:
\begin{enumerate}
    \item If $\HJ^B \leq_H A$ then $\omega_1^B < \omega_1^A.$
    \item If $B \leq_H A$ and $\omega_1^B < \omega_1^A$ then $\HJ^B \leq_H A.$
\end{enumerate}
\end{theorem}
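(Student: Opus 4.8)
The plan is to prove the two implications separately, relativizing the classical hyperarithmetic machinery to the oracles $A$ and $B$. Throughout I would lean on three standard facts about $\omega_1^X$: that it is the least non-$X$-recursive ordinal and equals $\sup\{|a| : a \in \HJ^X\}$, a supremum that is never attained; that $X \mapsto \omega_1^X$ is monotone under $\leq_T$ and hence under $\leq_H$; and the relativized $\Sigma^1_1$-boundedness theorem, in two forms, namely that any well-founded relation recursive in $X$ has rank strictly below $\omega_1^X$, together with the correspondence $X \leq_H Y$ iff $X$ is $\Delta^1_1(Y)$.

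For (1), the key lemma I would isolate is that $\omega_1^X < \omega_1^{\HJ^X}$ for every real $X$. To see this, note that Kleene's ordering $<_{\HJ^X}$ on notations is recursive in $\HJ^X$ and, restricted to $\HJ^X$, is well-founded with rank function $a \mapsto |a|$; since the ordinals $|a|$ are cofinal in $\omega_1^X$ but never reach it, this relation has rank exactly $\omega_1^X$. Applying boundedness relative to $\HJ^X$ then gives $\omega_1^X < \omega_1^{\HJ^X}$. With this lemma in hand, (1) is almost immediate: if $\HJ^B \leq_H A$ then $\HJ^B \leq_T C$ for some $C$ with $C \leq_H A$ and $A \leq_T C$ (so $\omega_1^C = \omega_1^A$), whence $\omega_1^B < \omega_1^{\HJ^B} \leq \omega_1^C = \omega_1^A$.

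For (2), suppose $B \leq_H A$ and $\omega_1^B < \omega_1^A$. Since $B$ is $\Delta^1_1(A)$, the $\Pi^1_1(B)$ set $\HJ^B$ is $\Pi^1_1(A)$; the real work is to show it is also $\Sigma^1_1(A)$. Because $\omega_1^B < \omega_1^A$, the ordinal $\omega_1^B$ is $A$-recursive, so I would fix a well-ordering $W \leq_T A$ of order type $\omega_1^B$. By boundedness, every $a \in \HJ^B$ satisfies $|a| < \omega_1^B = \mathrm{otp}(W)$, so the linear order coded by $a$ relative to $B$ order-embeds into $W$; conversely, if that linear order embeds into the well-order $W$ it is itself well-founded, so $a \in \HJ^B$. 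Hence $a \in \HJ^B$ iff $a$ codes a linear order relative to $B$ that admits an order-preserving map into $W$, which is a $\Sigma^1_1$ condition in the parameters $B$ and $W$, both hyperarithmetic in $A$, and therefore $\Sigma^1_1(A)$. Combining the two bounds, $\HJ^B$ is $\Delta^1_1(A)$, i.e.\ $\HJ^B \leq_H A$.

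The main obstacle, and the step I would want to get exactly right, is the strict inequality in (1): the easy monotonicity arguments yield only $\omega_1^B \leq \omega_1^A$, and ruling out equality is precisely what the lemma $\omega_1^X < \omega_1^{\HJ^X}$ accomplishes. That lemma in turn rests on $<_{\HJ^X}$ being recursive in $\HJ^X$ and on the notation norms being cofinal-but-unattained in $\omega_1^X$; verifying that the resulting well-founded relation genuinely has rank $\omega_1^X$, rather than something smaller, is where care is needed. By contrast, (2) is routine once the embeddability characterization of well-orders is combined with boundedness and the bookkeeping that $\Sigma^1_1$ in a $\Delta^1_1(A)$ parameter collapses to $\Sigma^1_1(A)$.
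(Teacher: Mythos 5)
The paper never proves this theorem: it is quoted as a classical result of Spector's and used as a black box in the computation of Spector ranks, so there is no in-paper argument to compare against. Your proposal is a correct rendition of the standard proof (as in Sacks, \emph{Higher Recursion Theory}), and the decomposition is the right one. For (1), the whole content is indeed your lemma $\omega_1^X < \omega_1^{\HJ^X}$, and your justification is sound: the relation $\{(a,b) : b \in \HJ^X \wedge a <_{\HJ^X} b\}$ is recursive in $\HJ^X$ (since $\{a : a <_{\HJ^X} b\}$ is uniformly r.e.\ in $X$ for $b \in \HJ^X$, and $X' \leq_T \HJ^X$), it is well-founded with $\rank(b) = |b|$, hence has rank exactly $\omega_1^X$ because the norms $|b|$ are cofinal in and never attain $\omega_1^X$, and boundedness then places that rank below $\omega_1^{\HJ^X}$. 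The one place you should be more careful is in (2), at the phrase ``the linear order coded by $a$ relative to $B$'': for the equivalence with embeddability into $W$ to define $\HJ^B$, it must hold for \emph{all} $a$, including $a \notin \HJ^B$, and the natural r.e.-in-$B$ presentation of the $<_{\HJ^B}$-predecessors of $a$ need not be a linear order, let alone one whose well-foundedness is equivalent to $a \in \HJ^B$, when $a$ is not a notation. The clean fix is to route through the normal form for $\Pi^1_1(B)$ sets: $a \in \HJ^B$ iff a uniformly $B$-recursive tree $T_a$ is well-founded, iff its Kleene--Brouwer ordering (always a $B$-recursive linear order) is a well-order, necessarily of type $< \omega_1^B = \mathrm{otp}(W)$ when $a \in \HJ^B$; embeddability of that linear order into $W$ then gives the desired $\Sigma^1_1(A)$ definition, and the rest of your bookkeeping (a $\Sigma^1_1$ condition in $\Delta^1_1(A)$ parameters is $\Sigma^1_1(A)$) goes through unchanged.
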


\begin{theorem}[Sacks]
\label{sacks}
If $\lambda$ is an admissible ordinal greater than $\omega$ and $X$ is a real such that $X$ computes a presentation of $\lambda$ (i.e.\ $\lambda < \omega_1^X$) then there is a real $Y$ that is hyperarithmetical in $X$ such that $\omega_1^Y = \lambda$.
\end{theorem}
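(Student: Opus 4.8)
The plan is to realize $Y$ as a suitably generic real over the admissible level $L_\lambda$, built by a transfinite recursion that $X$ can carry out. First I would extract from $X$ a presentation of $\lambda$; since $\lambda<\omega_1^X$, every ordinal $\alpha<\lambda$ is $X$-recursive and, running the constructible hierarchy along such a presentation, $X$ hyperarithmetically computes (a real coding) $L_\lambda$. Recall that $\lambda$ admissible is equivalent to $L_\lambda\models\mathrm{KP}$, and this is the only feature of $\lambda$ I will use beyond $\lambda<\omega_1^X$. Writing $\omega_1^Y$ for the least non-$Y$-recursive ordinal, the goal $\omega_1^Y=\lambda$ splits into a lower bound, ``every $\alpha<\lambda$ is $Y$-recursive,'' and an upper bound, ``$\lambda$ is not $Y$-recursive.''

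For the lower bound I would force with a notion $\mathbb{P}$ definable over $L_\lambda$ whose generic real $Y$ encodes, for cofinally many $\beta<\lambda$, a presentation $R_\beta$ of an ordinal $\alpha_\beta$ with $\sup_\beta\alpha_\beta=\lambda$. A density argument guarantees that for each $\alpha<\lambda$ some coded $R_\beta$ presents an ordinal above $\alpha$; since each $R_\beta$ is individually recoverable from $Y$, this makes every $\alpha<\lambda$ recursive in $Y$, so $\omega_1^Y\ge\lambda$. The essential point to respect here is that recoverability must be individual, not uniform.

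The upper bound is the crux, and the tension between it and the lower bound is the main obstacle. The coding must be arranged so that, although each $\alpha<\lambda$ is individually $Y$-recursive, there is no single $Y$-recursive procedure producing presentations cofinally in $\lambda$; for any such procedure could be spliced into a presentation of $\lambda=\sup_\beta\alpha_\beta$, forcing $\omega_1^Y>\lambda$. Guaranteeing this non-uniformity is exactly where admissibility enters: $\mathbb{P}$ must be designed so that $L_\lambda[Y]\models\mathrm{KP}$, and the preservation of admissibility is proved by the standard but delicate argument that the forcing relation $p\Vdash\varphi$ for the relevant arithmetic-in-$\dot Y$ formulas is $\Sigma_1$-definable over $L_\lambda$, so that $\Sigma_1$-collection in $L_\lambda$ bounds, below $\lambda$, the ordinals a condition can force to be $Y$-recursive. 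A genericity argument then transfers this bound from conditions to $Y$, giving $\omega_1^Y\le\lambda$. Verifying that the chosen $\mathbb{P}$ simultaneously achieves the coding and has a $\Sigma_1$-definable, admissibility-preserving forcing relation is the delicate heart of the proof.

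Finally, for $Y\le_H X$, I would build the generic explicitly rather than invoke an abstract existence theorem. Since $\mathbb{P}$ and all of its dense sets definable over $L_\lambda$ are hyperarithmetic in $X$ (using $X$'s presentation of $L_\lambda$), and there are only $\lambda$-many such dense sets, I can enumerate them along the given presentation of $\lambda$ and meet them one at a time in a recursion of length $\lambda<\omega_1^X$. Each step is hyperarithmetic in $X$, and a transfinite recursion of length below $\omega_1^X$ remains hyperarithmetic in $X$, so the resulting $Y$ satisfies $Y\le_H X$. Combining the two bounds then yields a real $Y\le_H X$ with $\omega_1^Y=\lambda$, as required.
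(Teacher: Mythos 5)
Your outline follows essentially the same route as the paper, which treats this as Sacks' classical theorem proved by forcing over $L_\lambda$ (citing Steel's tagged-tree version) and only adds the observation that, since $X$ hypercomputes the theory of $L_\lambda$ from a presentation of $\lambda$, the generic---and hence $Y$---can be taken hyperarithmetic in $X$. Your final paragraph reproduces exactly that observation, and the parts you defer (the precise design of $\mathbb{P}$ and the $\Sigma_1$-definability of forcing needed to preserve admissibility) are the same parts the paper delegates to the cited literature.
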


\begin{remark}
  Theorem \ref{sacks} is typically stated without the requirement that $Y$ is hyperarithmetical in $X$, though this is implicit in all or nearly all extant proofs of the theorem. For instance, in \cite{steel1978} Steel uses the method of forcing with tagged trees to prove Sacks' theorem. In that case, the real $Y$ is obtained as the reduct of a generic filter over $L_\lambda$. Since any presentation of $\lambda$ can hypercompute such a generic (if you can compute a presentation of $\lambda$ then it just takes $\omega\cdot(\lambda + 1)$ jumps to compute the theory of $L_\lambda$), $X$ can hypercompute a $Y$ witnessing Sacks' theorem.
\end{remark}

The calculation of Spector ranks now follows relatively easily.

\ranks*

\begin{remark}
The only reason we need to say $(1 + \alpha)^\text{th}$ admissible rather than $\alpha^\text{th}$ admissible is that the way admissible is usually defined, $\omega$ is an admissible ordinal but unlike all other countable admissible ordinals, it is not the $\omega_1$ of any real.
\end{remark}

\begin{proof}
We will argue by induction on $\alpha$ that for any $A$ if $\rank(A) > \alpha$ then $\omega_1^A$ is greater than the $(1 + \alpha)^\text{th}$ admissible ordinal and conversely that if $\omega_1^A$ is greater than the $(1 + \alpha)^\text{th}$ admissible then $\rank(A) > \alpha$.

First suppose $\rank(A) > \alpha$. So there is some $B$ of rank $\alpha$ such that $\HJ^B \leq_H A$. By Spector's result, theorem \ref{spector}, this implies $\omega_1^B < \omega_1^A$. And by the induction assumption, $\omega_1^B$ is at least the $(1 + \alpha)^\text{th}$ admissible so $\omega_1^A$ is greater than the $(1 + \alpha)^\text{th}$ admissible.

Now suppose that $\omega_1^A$ is greater than the $(1 + \alpha)^\text{th}$ admissible. Let $\lambda$ denote the $(1 + \alpha)^\text{th}$ admissible. By Sacks' theorem, there is some $B$ hyperarithmetical in $A$ such that $\omega_1^B = \lambda$. Since $\omega_1^B < \omega_1^A$, Spector's theorem implies that $\HJ^B \leq_H A$ and hence $\rank(B) < \rank(A)$. By the induction assumption, $\rank(B)$ is at least $\alpha$, so $\rank(A) > \alpha$.
\end{proof}

\begin{theorem}[Silver]
If $\alpha$ is admissible relative to $0^\sharp$ then $\alpha$ is a cardinal in $L$.
\end{theorem}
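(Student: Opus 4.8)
The plan is to prove the contrapositive. We work under the hypothesis that $0^\sharp$ exists (otherwise the statement is vacuous), so that by Silver's analysis there is a closed unbounded class $I$ of indiscernibles for $(L,\in)$; $0^\sharp$ is (a real coding) the theory of $L$ in these indiscernibles together with its Skolem functions, and every element of $L$ has the form $t^L(\iota_1,\dots,\iota_n)$ for a canonical Skolem term $t$ and indiscernibles $\iota_1 < \dots < \iota_n$ in $I$. Assuming $\alpha$ is admissible relative to $0^\sharp$, it is in particular a limit ordinal, so in the contrapositive I may assume $\alpha$ is a limit ordinal that is \emph{not} a cardinal in $L$ (the successor case forces non-admissibility trivially), and I aim to show $L_\alpha[0^\sharp]$ is not admissible.

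First I would reduce non-admissibility to the existence of a collapse. Admissible sets satisfy $\Sigma_1$-collection, so the $\Sigma_1$-definable image of a set must be bounded. Hence to refute admissibility of $L_\alpha[0^\sharp]$ it suffices to produce an ordinal $\delta < \alpha$ and a map $f : \delta \to \alpha$ whose graph is $\Sigma_1$-definable over $L_\alpha[0^\sharp]$ with parameters from $L_\alpha[0^\sharp]$ and whose range is cofinal in $\alpha$. A \emph{surjection} $f$ of some $\delta < \alpha$ onto $\alpha$ is more than enough: its range is all of $\mathrm{Ord}\cap L_\alpha[0^\sharp] = \alpha$, which no element of $L_\alpha[0^\sharp]$ can bound, so $\Sigma_1$-collection fails outright.

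The heart of the argument is to manufacture such an $f$ from the failure of $\alpha$ to be an $L$-cardinal. Since $\alpha$ is not a cardinal in $L$, fix in $L$ a surjection $h : \delta \to \alpha$ with $\delta < \alpha$, and write $h = t^L(\iota_1,\dots,\iota_m,\kappa_1,\dots,\kappa_p)$, separating the indiscernible parameters below $\alpha$ (the $\iota_i$) from those that are $\ge \alpha$ (the $\kappa_j$). The graph of $h$ consists of pairs $(\xi,\eta)$ with $\xi,\eta < \alpha$, so every element of the graph already lies in $L_\alpha[0^\sharp]$; what must be arranged is a $\Sigma_1$ \emph{definition} of this graph over $L_\alpha[0^\sharp]$ that does not mention the parameters $\kappa_j \ge \alpha$. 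This is exactly where indiscernibility enters: because $h$ takes values below $\alpha$, the relation ``$t^L(\bar\iota,\bar\kappa)(\xi)=\eta$'' for $\xi,\eta$ below the $\kappa_j$ depends only on the order type of the parameters, i.e.\ on the type coded by $0^\sharp$, so the large indiscernibles can be projected out and replaced by a $\Sigma_1$ assertion about $0^\sharp$ and the $\iota_i$. The latter are ordinals $< \alpha$ and hence legitimate parameters in $L_\alpha[0^\sharp]$. This yields a $\Sigma_1$-over-$L_\alpha[0^\sharp]$ surjection $f : \delta \to \alpha$, and combined with the previous paragraph it shows $L_\alpha[0^\sharp] \not\models \mathrm{KP}$.

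The main obstacle is precisely this projection step: passing from a collapse of $\alpha$ that exists in $L$ — and may be defined using indiscernibles above $\alpha$, hence using information about the $L$-hierarchy beyond $L_\alpha$ — to a collapse that is $\Sigma_1$-definable over $L_\alpha[0^\sharp]$ from $0^\sharp$ and parameters below $\alpha$. Making this rigorous requires the full strength of Silver's indiscernibility machinery: one must check that the Skolem-term representation of $h$ can be chosen so that each value $h(\xi) < \alpha$ is pinned down uniformly by the indiscernible type, independently of which large indiscernibles $\kappa_j$ occur, so that ``$0^\sharp$ knows'' the graph of $h$ from below $\alpha$. Once this is in hand, the collection-failure argument of the second paragraph is routine.
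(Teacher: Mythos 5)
The paper states this result as a classical theorem of Silver and gives no proof of it, so there is nothing of the authors' to compare your argument against; I can only assess the proposal on its own terms. Your skeleton is fine: a surjection of some $\delta<\alpha$ onto $\alpha$ whose graph is $\Sigma_1$-definable over $L_\alpha[0^\sharp]$ does refute admissibility via $\Sigma_1$-collection, and reducing to the case of a limit ordinal $\alpha$ that is not an $L$-cardinal is harmless. The problem is that the central step is not carried out, and the one concrete claim you make about how to carry it out is false. You assert that ``the relation $t^L(\bar\iota,\bar\kappa)(\xi)=\eta$ for $\xi,\eta$ below the $\kappa_j$ depends only on the order type of the parameters, i.e.\ on the type coded by $0^\sharp$.'' Indiscernibility determines the truth values of formulas \emph{all} of whose parameters are indiscernibles; here $\xi$ and $\eta$ are arbitrary ordinals below $\alpha$ and are generally not indiscernibles. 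To bring $0^\sharp$ to bear you must first represent $\xi$ and $\eta$ themselves as Skolem terms in indiscernibles, and in the only nontrivial case --- where $\alpha$ is not a limit of Silver indiscernibles --- those representations in general require indiscernibles $\ge\alpha$. So the ``projection'' problem you flag for $h$ reappears, unaddressed, for every argument and value of $h$.

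What actually has to be done is roughly this. Let $I$ be the Silver indiscernibles and $\lambda=\sup(I\cap\alpha)$. If $\lambda=\alpha$ then $\alpha$ is a limit of $L$-cardinals and hence an $L$-cardinal, with no use of admissibility. If $\lambda<\alpha$, remarkability of the EM-type gives that every ordinal $<\alpha$ equals $t^L(\vec\mu,\vec\nu)$ with $\vec\mu\subseteq I\cap(\lambda+1)$ and with the value independent of the choice of $\vec\nu$ from $I\setminus\alpha$; the desired surjection onto $\alpha$ is then term evaluation, i.e.\ the transitive collapse of the term model of $0^\sharp$ generated by $I\cap(\lambda+1)$ together with $\omega$ further constants on top. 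To turn this into a contradiction with admissibility one must (i) show that $I\cap(\lambda+1)$, or enough information about it, is available inside or definably over $L_\alpha[0^\sharp]$ --- itself a point requiring proof, since nothing yet says $L_\alpha[0^\sharp]$ can recognize its own indiscernibles --- and (ii) observe that an admissible $L_\alpha[0^\sharp]$ could compute the collapse by $\Sigma_1$-recursion, producing the forbidden bounded-domain surjection onto $\alpha$. Your final paragraph names this obstacle and defers it to ``the full strength of Silver's indiscernibility machinery''; that is a statement of intent, not a proof, and the intermediate claim it rests on is incorrect as stated.
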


Hence if $X$ is a real in the cone above $0^\sharp$ then $\omega_1^X$ is a cardinal in $L$. Suppose that $\omega_1^X$ is the $\alpha^\text{th}$ admissible. Since $\omega_1^X$ is a cardinal in $L$, it follows that actually $\alpha = \omega_1^X = \omega_\alpha^{CK}$. So if $0^\sharp$ exists then on a cone, the Spector rank of a real $X$ is equal to $\omega_1^X$.

\begin{theorem}
If $0^\sharp$ exists, then for all $A$ on a cone, the Spector rank of $A$ is $\omega_1^A$.
\end{theorem}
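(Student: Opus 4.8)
The plan is to combine the characterization of Spector ranks from Theorem \ref{thm-ranks} with Silver's theorem by way of a counting argument for admissible ordinals. By Theorem \ref{thm-ranks}, the Spector rank of $A$ equals $\omega_1^A$ precisely when $\omega_1^A$ is the $(1 + \omega_1^A)^{\text{th}}$ admissible ordinal; since $\omega_1^A$ is infinite we have $1 + \omega_1^A = \omega_1^A$, so the goal reduces to showing that $\omega_1^A$ is a \emph{fixed point} of the enumeration of admissibles, i.e.\ $\omega_1^A = \omega_{\omega_1^A}^{CK}$. I would take the cone to be the set of reals computing $0^\sharp$, so that ``on a cone'' means ``for all $A \geq_T 0^\sharp$.''

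First I would establish that for $A \geq_T 0^\sharp$ the ordinal $\omega_1^A$ is admissible relative to $0^\sharp$. This follows from monotonicity of relativized admissibility: $\omega_1^A$ is admissible relative to $A$ by definition, and since $0^\sharp \leq_T A$, any ordinal admissible relative to $A$ is admissible relative to $0^\sharp$. Silver's theorem then yields that $\omega_1^A$ is a cardinal in $L$.

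The heart of the argument is the counting lemma: every infinite cardinal $\kappa$ of $L$ is the $\kappa^{\text{th}}$ admissible ordinal. The idea is that each $L$-cardinal is itself admissible (as $L_\kappa \models \mathsf{KP}$ when $\kappa$ is an $L$-cardinal), and that the admissibles below $\kappa$ are cofinal in $\kappa$ and have order type exactly $\kappa$: for any $\lambda < \kappa$ the next admissible above $\lambda$ has the same $L$-cardinality as $\lambda$ and hence remains below $\kappa$, so no proper initial segment of $\kappa$ exhausts the admissibles, while there are at most $\kappa$ ordinals below $\kappa$ to begin with. Since the enumeration function of admissibles is continuous, its value at $\kappa$ is the supremum of its values below $\kappa$, which is $\kappa$ itself. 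Applying this with $\kappa = \omega_1^A$ gives $\omega_1^A = \omega_{\omega_1^A}^{CK}$, and Theorem \ref{thm-ranks} then delivers that the Spector rank of $A$ equals $\omega_1^A$.

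The main obstacle is the counting lemma, specifically the claim that the next admissible above $\lambda$ stays below $\kappa$. The cleanest route is to note that this next admissible is the least $\mu > \lambda$ with $L_\mu \models \mathsf{KP}$, and that admissibles are cofinal below any $L$-cardinal, so such a $\mu$ can be found below $|\lambda|^{+L}$; as $\lambda < \kappa$ forces $|\lambda|^{+L} \leq \kappa$, the next admissible lies strictly below $\kappa$. I would also be careful that all of this counting takes place in $V$ (where $\omega_1^A$ is genuinely countable), with $L$ serving only as the inner model supplying the cardinal structure and admissibility being absolute between $V$ and $L$; the hypothesis that $0^\sharp$ exists is exactly what guarantees, via Silver, that the countable ordinal $\omega_1^A$ is an $L$-cardinal in the first place.
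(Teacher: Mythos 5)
Your proposal is correct and follows the same route as the paper: pass to the cone above $0^\sharp$, use Silver's theorem to see that $\omega_1^A$ is an $L$-cardinal, observe that $L$-cardinals above $\omega$ are fixed points of the enumeration of admissibles, and conclude via Theorem \ref{thm-ranks}; you merely supply the counting details that the paper leaves implicit. The only nitpick is that your counting lemma should be stated for $L$-cardinals $\kappa > \omega$ (since $\omega$ is the $0^{\text{th}}$, not the $\omega^{\text{th}}$, admissible), which is harmless here because $\omega_1^A > \omega$.
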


Alternatively, one can infer the previous theorem from the following proposition due to Martin.

\begin{proposition}[Martin]
Assuming appropriate determinacy hypotheses, if $F$ is a degree invariant function from reals to
(presentations of) ordinals such that $F(A)\leq \omega_1^A$, then either
$F$ is constant on a cone or $F(A)=\omega_1^A$ on a cone.
\end{proposition}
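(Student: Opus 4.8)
The plan is to run everything through Martin's cone measure. I assume the determinacy hypotheses are strong enough that Martin's cone theorem holds for the relevant pointclass and that the induced filter $\mu$ on the Turing degrees---$S \in \mu$ iff $S$ contains a cone---is a countably complete nonprincipal ultrafilter. Both $F$ and the map $A \mapsto \omega_1^A$ are degree invariant, so they descend to $\mu$-measurable functions on degrees. I would then form the ultrapower $j = j_\mu \colon V \to M$. Countable completeness gives $\mathrm{crit}(j) \geq \omega_1$, so $j$ fixes every countable ordinal. Since the values $\omega_1^A$ are cofinal in $\omega_1$ (a code for a countable $\gamma$ has $\omega_1^A > \gamma$ on the cone above it; this is also visible from Spector's Theorem \ref{spector}), we get $[A \mapsto \omega_1^A]_\mu \geq j(\gamma) = \gamma$ for every $\gamma < \omega_1$, hence $[A \mapsto \omega_1^A]_\mu \geq \omega_1$; and $\omega_1^A < \omega_1$ for all $A$ forces $[A \mapsto \omega_1^A]_\mu < j(\omega_1)$ by {\L}o\'s. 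Therefore $j(\omega_1) > \omega_1$ and $\mathrm{crit}(j) = \omega_1$.

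Granting for the moment the key equality $[A \mapsto \omega_1^A]_\mu = \omega_1$, the dichotomy falls out at once. By {\L}o\'s the hypothesis $F(A) \leq \omega_1^A$ gives $[F]_\mu \leq [A \mapsto \omega_1^A]_\mu = \omega_1 = \mathrm{crit}(j)$. If $[F]_\mu = \omega_1$, then $[F]_\mu = [A \mapsto \omega_1^A]_\mu$, so $\{A : F(A) = \omega_1^A\} \in \mu$, i.e.\ $F(A) = \omega_1^A$ on a cone. Otherwise $[F]_\mu = \gamma$ for some $\gamma < \omega_1$; since $\gamma < \mathrm{crit}(j)$ we have $\gamma = j(\gamma) = [A \mapsto \gamma]_\mu$, whence $\{A : F(A) = \gamma\} \in \mu$ and $F$ is constant with value $\gamma$ on a cone. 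Notably this step uses nothing about $F$ beyond degree invariance and the bound, so no assumption that $F$ factors through $A \mapsto \omega_1^A$ is needed.

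Everything thus reduces to the single equality $[A \mapsto \omega_1^A]_\mu = \omega_1$, and this is the step I expect to be the main obstacle. I would prove it by pushing $\mu$ forward along $\pi \colon [A] \mapsto \omega_1^A$ to a measure $\nu$ on $\omega_1$. The general pushforward identity $[g \circ \pi]_\mu = [g]_{\pi_*\mu}$, applied to $g = \mathrm{id}$, gives $[A \mapsto \omega_1^A]_\mu = [\mathrm{id}]_\nu$, so it suffices to show $[\mathrm{id}]_\nu = \omega_1$, i.e.\ that $\nu$ is a normal measure on $\omega_1$. This is where the determinacy strength is really spent: under the hypotheses, the club filter on $\omega_1$ is a normal ultrafilter (Solovay), and I would check---using Sacks' Theorem \ref{sacks} to see that the ordinals $\omega_1^A$ realized on a cone form a tail of the countable admissibles, hence a club---that $\nu$ agrees with the club filter rather than merely extending it (two ultrafilters, one extending the other, must coincide). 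Normality then yields $[\mathrm{id}]_\nu = \omega_1$ by the usual pressing-down argument. The delicate points to get right are this identification of $\nu$ with the club filter and the bookkeeping of exactly which pointclass-determinacy makes $\mu$ countably complete; modulo these, the argument is as above.
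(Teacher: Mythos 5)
The paper does not actually prove this proposition: it is quoted as a known result of Martin and used only as an alternative route to the preceding theorem, so there is no in-paper argument to compare against. Judged on its own terms, your proposal follows what is essentially the standard proof (Martin measure ultrapower, $\mathrm{crit}(j)=\omega_1$, the key identity $[A\mapsto\omega_1^A]_\mu=\omega_1$ via normality of the pushforward, then the two cases $[F]_\mu=\omega_1$ and $[F]_\mu<\omega_1$), and the reduction of the dichotomy to that key identity is correct as written.

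There is, however, one concrete error in the step you yourself flag as delicate. The set $\{\omega_1^A : A \geq_T X\}$ is \emph{not} a club: by relativized Sacks it is the set of $X$-admissibles above $\omega_1^X$, which is unbounded but not closed --- the supremum of the first $\omega$ many $X$-admissibles above any point fails $\Sigma_1$-replacement over $L[X]$ and so is not $X$-admissible. So ``a tail of the countable admissibles, hence a club'' is false on both counts (it is a tail of the \emph{$X$-admissibles}, and it is not closed). What your argument actually needs, and what is true, is only that this image \emph{contains} a club, e.g.\ $\{\alpha : L_\alpha[X] \prec L_{\omega_1}[X]\}$, each member of which is $X$-admissible and hence (by relativized Sacks) of the form $\omega_1^A$ for some $A\geq_T X$; alternatively one can run Solovay's game directly and note via $\Sigma^1_1$-boundedness that the closure-point club of a winning strategy $\sigma$ contains $\omega_1^A$ for every $A\geq_T\sigma$. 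Either repair gives that every $\nu$-large set contains a club, so $\nu$ is contained in the club filter, and since both are ultrafilters under the determinacy hypotheses (Solovay) they coincide --- which is the containment your parenthetical remark correctly identifies as sufficient. With that substitution the proof goes through; Sacks' theorem alone does not deliver the club, and the ``hence a club'' claim as stated would not survive scrutiny.
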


One could also consider the analogous relation given by replacing hyperarithmetic reducibility and the hyperjump with Turing reducibility and the Turing jump. Namely, define $\prec_T$ by $A \prec_T B$ iff $A' \leq_T B$. By results of Harrison (see \cite{harrison1968}), this relation is not well-founded. However, it \emph{is} well-founded if we restrict ourselves to the hyperarithmetic reals, as shown by Putnam and Enderton in \cite{enderton1970}. In that paper, Putnam and Enderton also show that the rank of a hyperarithmetic real $A$ in this relation is ``within $2$'' of the least $\alpha$ such that $A$ cannot compute $0^{(\alpha)}$. More precisely, if the rank of $A$ is $\alpha$ then $A$ cannot compute $0^{(\alpha + 1)}$ and if $A$ cannot compute $0^{(\alpha)}$ then the rank of $A$ is at most $\alpha + 2$.

\bibliographystyle{plain}
\bibliography{hyperjump}

\end{document}